\theoremstyle{plain}
  \declaretheorem[numberwithin=section]{theorem}
  \declaretheorem[numberlike=theorem]{corollary}
  \declaretheorem[numberlike=theorem]{proposition}
  \declaretheorem[numberlike=theorem]{lemma}
\theoremstyle{definition}
  \declaretheorem[numberlike=theorem]{example}
\newenvironment{acknowledgements}{\bigskip\textbf{Acknowledgements.}}{}
\newcommand{\md}{\mathrm{d}}
\newcommand{\email}[1]{{\textit{Email:} \texttt{#1}}}
\begin{document}

\title{Supercongruences for polynomial analogs of the Ap\'ery numbers}

\author{Armin Straub\thanks{\email{straub@southalabama.edu}}\\
Department of Mathematics and Statistics\\
University of South Alabama}

\date{March 19, 2018}

\maketitle

\begin{abstract}
  We consider a family of polynomial analogs of the Ap\'ery numbers, which
  includes $q$-analogs of Krattenthaler--Rivoal--Zudilin and Zheng, and show
  that the supercongruences that Gessel and Mimura established for the
  Ap\'ery numbers generalize to these polynomials. Our proof relies on
  polynomial analogs of classical binomial congruences of Wolstenholme and
  Ljunggren. We further indicate that this approach generalizes to other
  supercongruence results.
\end{abstract}

\section{Introduction}

Among the many interesting properties of the {\emph{Ap\'ery numbers}}
\begin{equation}
  A (n) = \sum_{k = 0}^n \binom{n}{k}^2 \binom{n + k}{k}^2, \label{eq:apery}
\end{equation}
which are at the heart of R.~Ap\'ery's proof \cite{apery}, \cite{alf} of
the irrationality of $\zeta (3)$, are congruences with surprisingly large
modulus. Following F.~Beukers \cite{beukers-apery85} these are often
referred to as {\emph{supercongruences}}. For instance, for all primes $p
\geq 5$,
\begin{equation}
  A (p n) \equiv A (n) \quad (\operatorname{mod} p^3), \label{eq:apery-sc}
\end{equation}
as conjectured by S.~Chowla, J.~Cowles and M.~Cowles \cite{ccc-apery} and
proved by I.~Gessel \cite{gessel-super} and Y.~Mimura \cite{mimura-apery}.
When $n$ is divisible by $p$ then this congruence can be further strengthened
\cite{beukers-apery85}, \cite{coster-sc}. Indeed, the congruence $A (p^r
n) \equiv A (p^{r - 1} n)$ holds modulo $p^{3 r}$.

In this paper, we are concerned with polynomial analogs of the Ap\'ery
numbers \eqref{eq:apery} and it is our goal to demonstrate that these
polynomials share some of the remarkable arithmetic properties. In
Section~\ref{sec:A:q}, we review $q$-analogs of the Ap\'ery numbers
featuring in work of C.~Krattenthaler, T.~Rivoal and W.~Zudilin \cite{krz-q}
and D.~Zheng \cite{zheng-qapery}, as well as other natural constructions. In
particular, in Lemma~\ref{lem:Aq:krz}, we show that the $q$-numbers of
Krattenthaler, Rivoal and Zudilin, which are defined via a $q$-partial
fraction decomposition, essentially have the explicit $q$-binomial
representation
\begin{equation*}
  A_q (n) = \sum_{k = 0}^n q^{(n - k)^2} \binom{n}{k}_q^2 \binom{n +
   k}{k}_q^2,
\end{equation*}
closely resembling \eqref{eq:apery}.

In Section~\ref{sec:super}, we prove that the polynomials $A_q (n)$ satisfy
congruences modulo cubes of the $m$th cyclotomic polynomials $\Phi_m (q)$.
When specialized to $q = 1$ and $m = p$, these congruences imply the known
supercongruences \eqref{eq:apery-sc}.

\begin{corollary}
  \label{cor:A:qcong}For any integer $m$,
  \begin{equation}
    A_q (m n) \equiv A_{q^{m^2}} (n) - \frac{m^2 - 1}{12} (q^m - 1)^2 n^2 A_1
    (n) \quad (\operatorname{mod} \Phi_m (q)^3) . \label{eq:A:qcong:intro}
  \end{equation}
\end{corollary}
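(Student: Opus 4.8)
The plan is to feed the explicit $q$-binomial sum of Lemma~\ref{lem:Aq:krz} into $q$-analogs of the classical binomial congruences of Wolstenholme and Ljunggren, after splitting the summation according to residues modulo $m$. Starting from $A_q(mn) = \sum_{j=0}^{mn} q^{(mn-j)^2}\binom{mn}{j}_q^2\binom{mn+j}{j}_q^2$ (i.e.\ $n\mapsto mn$ in Lemma~\ref{lem:Aq:krz}), I would write $j = mk+r$ with $0\le r<m$. Because $\Phi_m(q)$ divides $q^{\ell}-1$ exactly when $m\mid\ell$, the $q$-Lucas congruence gives $\binom{mn}{mk+r}_q\equiv 0\pmod{\Phi_m(q)}$ whenever $r\ne0$; hence each off-diagonal term is already divisible by $\Phi_m(q)^2$ and only its leading $\Phi_m(q)^2$-coefficient matters modulo $\Phi_m(q)^3$, while the diagonal terms $j=mk$ are coprime to $\Phi_m(q)$ and require the full mod-$\Phi_m(q)^3$ analysis.

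For the diagonal part I would use that $q^{(mn-mk)^2}=q^{m^2(n-k)^2}$ exactly, so it remains to compare $\binom{mn}{mk}_q^2\binom{mn+mk}{mk}_q^2$ with $\binom{n}{k}_{q^{m^2}}^2\binom{n+k}{k}_{q^{m^2}}^2$ modulo $\Phi_m(q)^3$. Applying a $q$-analog of Ljunggren's congruence (the generalization of Wolstenholme's) to $\binom{mn}{mk}_q$ and $\binom{mn+mk}{mk}_q$ and multiplying out should yield $\binom{n}{k}_{q^{m^2}}^2\binom{n+k}{k}_{q^{m^2}}^2$ together with a correction that is a multiple of $(q^m-1)^2$; since $(q^m-1)^2$ is itself divisible by $\Phi_m(q)^2$, the binomials in the correction may be taken at $q=1$, and the correction is expected to be $-\tfrac{m^2-1}{12}(q^m-1)^2\,k(2n-k)\binom{n}{k}^2\binom{n+k}{k}^2$. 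Summing over $k$ then gives $A_{q^{m^2}}(n)-\tfrac{m^2-1}{12}(q^m-1)^2\sum_{k}k(2n-k)\binom{n}{k}^2\binom{n+k}{k}^2$ for the diagonal contribution.

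For an off-diagonal term ($r\ne0$, forcing $0\le k\le n-1$) I would factor $\binom{mn}{mk+r}_q = \binom{n}{k}_{q^m}\,(1-q^{m(n-k)})\,u_{n,k,r}(q)$, where the unique factor of $\Phi_m(q)$ is supplied by $1-q^{m(n-k)}$ (contributing $n-k$ upon reduction) and $u_{n,k,r}$ has neither a zero nor a pole at a primitive $m$th root of unity $\zeta$; meanwhile $\binom{mn+mk+r}{mk+r}_q\equiv\binom{n+k}{k}$ and $q^{(mn-mk-r)^2}\equiv q^{r^2}\pmod{\Phi_m(q)}$. Evaluating $u_{n,k,r}$ at $\zeta$ (a telescoping product of cyclotomic values) and combining, each off-diagonal term becomes $(q^m-1)^2\,\dfrac{\zeta^{r}}{(1-\zeta^{r})^2}\,(n-k)^2\binom{n}{k}^2\binom{n+k}{k}^2$ modulo $\Phi_m(q)^3$; summing over $r=1,\dots,m-1$ uses the closed form $\sum_{\omega^m=1,\ \omega\ne1}\dfrac{\omega}{(1-\omega)^2}=-\tfrac{m^2-1}{12}$ (a logarithmic-derivative computation on $1+x+\cdots+x^{m-1}$), giving $-\tfrac{m^2-1}{12}(q^m-1)^2\sum_{k}(n-k)^2\binom{n}{k}^2\binom{n+k}{k}^2$ for the off-diagonal contribution.

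Adding the two contributions and using the identity $k(2n-k)+(n-k)^2=n^2$, the combined correction collapses to $-\tfrac{m^2-1}{12}(q^m-1)^2\,n^2 A_1(n)$, which is \eqref{eq:A:qcong:intro}; the case $q=1$, $m=p$ (with $\Phi_p(1)=p$) then recovers the supercongruence \eqref{eq:apery-sc}, the correction term vanishing. I expect the main obstacle to be pinning down the mod-$\Phi_m(q)^3$ forms of the $q$-Wolstenholme and $q$-Ljunggren congruences with their correction terms made fully explicit — in particular so that the diagonal correction carries precisely the constant $-\tfrac{m^2-1}{12}$ and the weight $k(2n-k)$ complementing the off-diagonal weight $(n-k)^2$ — whereas the off-diagonal reduction and the root-of-unity summation, though delicate, are essentially forced.
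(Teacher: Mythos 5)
Your proposal is correct and follows essentially the same route as the paper, which obtains the corollary as the specialization $n_1=\cdots=n_4=n$ of Theorem~\ref{thm:A:qcong}: the same split of the summation index by residues modulo $m$, Theorem~\ref{thm:qlunggren:x} applied to the diagonal terms (your weight $k(2n-k)$ and the identity $k(2n-k)+(n-k)^2=n^2$ mirror the cancellation of the $\pm k^2$ terms between \eqref{eq:S1} and \eqref{eq:S2}), and for the off-diagonal terms exactly the content of Proposition~\ref{prop:qbin:cong:2} together with $\sum_{j=1}^{m-1} q^j/[j]_q^2 \equiv -\tfrac{m^2-1}{12}(q-1)^2 \pmod{\Phi_m(q)}$. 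The only cosmetic difference is that you get the off-diagonal constant by evaluating at a primitive $m$th root of unity and computing $\sum_{\omega^m=1,\,\omega\neq 1}\omega/(1-\omega)^2=-\tfrac{m^2-1}{12}$ via logarithmic derivatives (your unexecuted evaluation of $u_{n,k,r}$ at $\zeta$ is precisely what Proposition~\ref{prop:qbin:cong:2} supplies), whereas the paper phrases this step as a polynomial congruence deduced from the Shi--Pan congruences \eqref{eq:sp1} and \eqref{eq:sp2}.
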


This result is a consequence of Theorem~\ref{thm:A:qcong}, our main theorem,
which offers a more general multivariate version. To the best of our
knowledge, congruence \eqref{eq:A:qcong:intro} is the first polynomial analog
of a supercongruence of the type \eqref{eq:apery-sc}. The congruences
\eqref{eq:apery-sc} are conjectured to hold for all Ap\'ery-like sequences
(see \cite{oss-sporadic} as well as Section~\ref{sec:concl}) but remain open
in some cases. It is our hope that understanding these congruences in the most
general setting might help shed light on the more mysterious cases. Another
reason to be interested in polynomial analogs is the availability of
additional techniques not available outside the $q$-world, illustrated, for
instance, in the very recent work \cite{gz-rama-q} of V.~Guo and W.~Zudilin.
In particular, for supercongruences of a different type, Guo and Zudilin
succeed in introducing an additional variable $a$ (so that the limit $a
\rightarrow 1$ recovers the original congruence) in such a way that the
generalized congruence holds modulo $(1 - q^n) (a - q^n) (1 - a q^n) / (1 -
q)$. The crucial benefit is that this generalized congruence can be
established modulo $(1 - q^n) / (1 - q)$, $a - q^n$ and $1 - a q^n$
individually, since these polynomials are coprime. These three individual
congruences then correspond to (exactly) evaluating the terms in question when
$q$ is an $n$th root of unity (for $(1 - q^n) / (1 - q)$) and when $a = q^{\pm
n}$ (for $a - q^n$ and $1 - a q^n$). It would be of considerable interest to
determine whether the congruences considered herein could be similarly
approached (and, to some extent, better explained) by (creatively!)
introducing an appropriate additional variable $a$ (this is referred to as
{\emph{creative microscoping}} in \cite{gz-rama-q}).

Supercongruence results for other sequences, that are based on a suitable
binomial sum representation and analogous arguments, can be generalized
likewise to the polynomial setting. We illustrate that point at the example of
the family of generalized Ap\'ery sequences
\begin{equation}
  A^{(\lambda, \mu)} (n) = \sum_{k = 0}^n \binom{n}{k}^{\lambda} \binom{n +
  k}{k}^{\mu} . \label{eq:A:x}
\end{equation}
E.~Deutsch and B.~Sagan \cite{ds-cong} showed that, if $\lambda \geq 2$
and $\mu \geq 1$, these sequences satisfy the supercongruences
\begin{equation}
  A^{(\lambda, \mu)} (p n) \equiv A^{(\lambda, \mu)} (n) \quad (\operatorname{mod}
  p^3) \label{eq:apery-sc:x}
\end{equation}
for all primes $p \geq 5$ (in fact, a generalized version of these
congruences already appears in M.~Coster's thesis \cite{coster-sc}). The
congruences \eqref{eq:apery-sc} are the special case $(\lambda, \mu) = (2,
2)$.

We prove the following polynomial analog of the congruences
\eqref{eq:apery-sc:x}. Observe how the case $(\lambda, \mu) = (2, 2)$ reduces
to the Ap\'ery number case of Corollary~\ref{cor:A:qcong}, in which case the
congruences take a particularly clean form.

\begin{theorem}
  \label{thm:A:qcong:x}For fixed integers $\lambda, \mu$ such that $\lambda
  \geq 2$ and $\mu \geq 0$, as well as a polynomial $\alpha \in
  \mathbb{Z} [n, k]$ such that $\alpha (m n, m k) = m^2 \alpha (n, k)$ and
  $\alpha (0, k) = k^2$, define
  \begin{equation}
    A^{(\lambda, \mu)}_q (n) = \sum_{k = 0}^n q^{\alpha (n, k)}
    \binom{n}{k}_q^{\lambda} \binom{n + k}{k}_q^{\mu} . \label{eq:Aq:x}
  \end{equation}
  Then, for any positive integer $m$,
  \begin{equation}
    A^{(\lambda, \mu)}_q (m n) \equiv A^{(\lambda, \mu)}_{q^{m^2}} (n) -
    \frac{m^2 - 1}{12} (q^m - 1)^2 R^{(\lambda, \mu)} (n) \quad (\operatorname{mod}
    \Phi_m (q)^3), \label{eq:A:qcong:x}
  \end{equation}
  where the numbers
  \begin{equation*}
    R^{(\lambda, \mu)} (n) = \sum_{k \geq 0} c_{n, k}^{(\lambda, \mu)}
     \binom{n}{k}^{\lambda} \binom{n + k}{k}^{\mu}, \quad c_{n, k}^{(\lambda,
     \mu)} = \left\{ \begin{array}{ll}
       n^2 + (\mu - 2) \frac{n k}{2}, & \text{if $\lambda = 2$,}\\
       ((\lambda + \mu) n - \lambda k) \frac{k}{2}, & \text{if $\lambda > 2$,}
     \end{array} \right.
  \end{equation*}
  are independent of $\alpha$ and $m$.
\end{theorem}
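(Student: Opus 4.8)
The plan is to follow the proof of Theorem~\ref{thm:A:qcong}, the case $\lambda=\mu=2$, while carrying the exponents $\lambda,\mu$ through the computation. Fix a positive integer $m$ and work modulo $\Phi_m(q)^3$; reducing an expression modulo $\Phi_m(q)$ amounts to evaluating it at a primitive $m$th root of unity $\zeta$. Split the defining sum \eqref{eq:Aq:x} for $A^{(\lambda,\mu)}_q(mn)$ according to the residue of $k$ modulo $m$, say $k=m\ell+r$ with $0\le r<m$. The $q$-Lucas theorem gives $\binom{mn}{m\ell}_q\equiv\binom{n}{\ell}$, $\binom{mn}{m\ell+r}_q\equiv 0$ for $0<r<m$, and $\binom{mn+k}{k}_q\equiv\binom{n+\ell}{\ell}$, all modulo $\Phi_m(q)$; thus $\binom{mn}{k}_q$ contributes a single power of $\Phi_m(q)$ precisely when $m\nmid k$, and $\binom{mn+k}{k}_q$ never does. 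Consequently a term with $m\nmid k$ is divisible by $\Phi_m(q)^\lambda$.

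Consider first the terms with $m\nmid k$. When $\lambda\ge 3$ they vanish modulo $\Phi_m(q)^3$. When $\lambda=2$, pull out the factor $\Phi_m(q)^2$ and reduce the remainder modulo $\Phi_m(q)$. The prefactor simplifies dramatically: since $\alpha(mn,k)-\alpha(0,k)$ is a multiple of $m$ and $\alpha(0,k)=k^2$, one has $q^{\alpha(mn,k)}\equiv q^{k^2}\equiv\zeta^{r^2}\pmod{\Phi_m(q)}$, which is precisely why $R^{(\lambda,\mu)}(n)$ ends up independent of the choice of $\alpha$. A first-order refinement of the $q$-Lucas theorem evaluates $\binom{mn}{m\ell+r}_q/\Phi_m(q)$ at $q=\zeta$, with dependence on $n$ through $\binom{n}{\ell}(n-\ell)$ and a factor depending only on $m$ and $r$; squaring this, multiplying by $\zeta^{r^2}$ and by $\binom{n+\ell}{\ell}^\mu$, and summing over $r=1,\dots,m-1$ via the classical identity $\sum_{r=1}^{m-1}\bigl(4\sin^2(\pi r/m)\bigr)^{-1}=(m^2-1)/12$ turns these terms into $-\tfrac{m^2-1}{12}(q^m-1)^2\sum_{\ell\ge 0}(n-\ell)^2\binom{n}{\ell}^2\binom{n+\ell}{\ell}^\mu$ modulo $\Phi_m(q)^3$.

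For the main terms $k=m\ell$, the relation $\alpha(mn,m\ell)=m^2\alpha(n,\ell)$ makes the prefactor exactly $(q^{m^2})^{\alpha(n,\ell)}$. Apply the polynomial analog of Ljunggren's congruence from Section~\ref{sec:super} to $\binom{mn}{m\ell}_q$ and to $\binom{mn+m\ell}{m\ell}_q=\binom{m(n+\ell)}{m\ell}_q$: modulo $\Phi_m(q)^3$, each equals its $q^{m^2}$-analog times $1-\tfrac{m^2-1}{12}(q^m-1)^2\tfrac{b(a-b)}{2}$ with $(a,b)=(n,\ell)$ and $(n+\ell,\ell)$ respectively. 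Raising to the $\lambda$th and $\mu$th powers and dropping terms of $\Phi_m(q)$-order $\ge 3$ (so that in the surviving correction the $q^{m^2}$-binomials and the prefactor may be replaced by their values at $q^{m^2}=1$), the leading part reassembles $A^{(\lambda,\mu)}_{q^{m^2}}(n)$ exactly, while the correction equals $-\tfrac{m^2-1}{12}(q^m-1)^2\sum_{\ell\ge 0}\tfrac{\ell}{2}\bigl((\lambda+\mu)n-\lambda\ell\bigr)\binom{n}{\ell}^\lambda\binom{n+\ell}{\ell}^\mu$.

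It remains to add the two contributions. When $\lambda>2$, only the main-term correction is present, and $\tfrac{\ell}{2}\bigl((\lambda+\mu)n-\lambda\ell\bigr)=c^{(\lambda,\mu)}_{n,\ell}$. When $\lambda=2$, one also adds $(n-\ell)^2$ inside the sum, and $\tfrac{\ell}{2}\bigl((2+\mu)n-2\ell\bigr)+(n-\ell)^2=n^2+(\mu-2)\tfrac{n\ell}{2}=c^{(2,\mu)}_{n,\ell}$; in both cases the total correction is $-\tfrac{m^2-1}{12}(q^m-1)^2R^{(\lambda,\mu)}(n)$, and since all dependence on $m$ now sits in the explicit prefactor, $R^{(\lambda,\mu)}(n)$ is manifestly independent of $m$ (and of $\alpha$). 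I expect the real work — exactly as in the proof of Theorem~\ref{thm:A:qcong} — to lie in the two binomial inputs: establishing the Ljunggren-type correction for $\binom{ma}{mb}_q$ and the first-order $q$-Lucas value for $\binom{mn}{m\ell+r}_q/\Phi_m(q)$ to the required precision, and then carrying out the root-of-unity sum over $r$ cleanly. Once these are available, the passage to general $\lambda,\mu$ is the multinomial bookkeeping above, with the dichotomy in $c^{(\lambda,\mu)}_{n,k}$ reflecting exactly the dichotomy between $\lambda=2$ (error terms survive) and $\lambda>2$ (error terms vanish).
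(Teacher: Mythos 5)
Your proposal is correct and takes essentially the same route as the paper's (omitted) proof, which mirrors that of Theorem~\ref{thm:A:qcong}: split the sum over $k$ by its residue mod $m$, handle $m\mid k$ via the $q$-Ljunggren congruence (Theorem~\ref{thm:qlunggren:x}), and handle $m\nmid k$ (which matters only for $\lambda=2$) via a first-order refinement of $q$-Lucas --- the content of Proposition~\ref{prop:qbin:cong:2} --- together with $\sum_{j=1}^{m-1} q^j/[j]_q^2 \equiv -\tfrac{m^2-1}{12}(q-1)^2 \pmod{\Phi_m(q)}$, of which your identity $\sum_{r=1}^{m-1}\bigl(4\sin^2(\pi r/m)\bigr)^{-1}=(m^2-1)/12$ is just the root-of-unity evaluation. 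Your coefficient bookkeeping, $(n-k)^2+\tfrac{k}{2}\bigl((2+\mu)n-2k\bigr)=n^2+(\mu-2)\tfrac{nk}{2}$ for $\lambda=2$ and $\tfrac{k}{2}\bigl((\lambda+\mu)n-\lambda k\bigr)$ for $\lambda>2$, matches the stated $c^{(\lambda,\mu)}_{n,k}$.
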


Our proof of this result proceeds analogously to the proof of
Theorem~\ref{thm:A:qcong} and is therefore omitted.

\begin{example}
  Note that, specializing Theorem~\ref{thm:A:qcong:x} to $(\lambda, \mu) = (2,
  0)$ and $\alpha (n, k) = k^2$, we obtain the sequence
  \begin{equation*}
    \sum_{k = 0}^n q^{k^2} \binom{n}{k}_q^2 = \binom{2 n}{n}_q
  \end{equation*}
  of central $q$-binomial coefficients. Because of the simple identity
  \begin{equation*}
    \sum_{k = 0}^n (n - k) \binom{n}{k}^2 = \frac{n}{2} \binom{2 n}{n},
  \end{equation*}
  the resulting congruences \eqref{eq:A:qcong:x} simplify to
  \begin{equation*}
    \binom{2 m n}{m n}_q \equiv \binom{2 n}{n}_{q^{m^2}} - \frac{m^2 - 1}{12}
     (q^m - 1)^2  \frac{n^2}{2} \binom{2 n}{n} \quad (\operatorname{mod} \Phi_m (q)^3)
     .
  \end{equation*}
  This matches the special case $a = 2 m$, $b = m$ of
  \eqref{eq:qljunggren:x:intro}. In the case $n = 1$ this further reduces to
  \eqref{eq:qwol:x}, which is proved in the appendix and is a crucial
  ingredient for the other polynomial congruences proved herein.
\end{example}

\section{Notations and conventions}

Throughout, all congruences, such as \eqref{eq:A:qcong:intro}, modulo a
polynomial $\varphi (q)$ are understood in the ring $\mathbb{Q} [q^{\pm 1}]$
of Laurent polynomials with rational coefficients. In other words,
\begin{equation}
  f (q) \equiv 0 \quad (\operatorname{mod} \varphi (q)) \label{eq:cong:poly}
\end{equation}
means that $f (q) = g (q) \varphi (q)$ for some Laurent polynomial $g (q) \in
\mathbb{Q} [q^{\pm 1}]$. In all congruences considered herein, $\varphi (q)$
is a monic polynomial with integer coefficients. On the other hand, we allow
$f (q)$ to have rational coefficients. For instance, the coefficients of $f
(q)$ typically involve quantities like $(m^2 - 1) / 12$ which are not integral
if $\gcd (m, 6) > 1$. Observe that, as a consequence of Gauss' lemma, if
$\varphi (q)$ and $f (q)$ both have integer coefficients and $\varphi (q)$ is
monic, then $f (q) = g (q) \varphi (q)$ implies that $g (q)$ has integer
coefficients as well. Therefore, in that case, the congruence
\eqref{eq:cong:poly} holds in the ring $\mathbb{Z} [q^{\pm 1}]$, so that, in
particular, we also have the ordinary congruence
\begin{equation}
  f (1) \equiv 0 \quad (\operatorname{mod} \varphi (1)) . \label{eq:cong:poly:1}
\end{equation}
Since our congruences are modulo powers of cyclotomic polynomials $\Phi_m
(q)$, it is useful to recall that $\Phi_m (1) = p$ if $m$ is a power of the
prime $p$. However, $\Phi_m (1) = 1$ if $m$ is not a prime power, in which
case the congruence \eqref{eq:cong:poly:1} carries no information (in contrast
to the polynomial congruence \eqref{eq:cong:poly}).

As usual, the $q$-analog of an integers $n \geq 0$ is the polynomial
$[n]_q = 1 + q + \cdots + q^{n - 1}$. Thus equipped, one introduces the
$q$-factorial as $[n]_q ! = [n]_q [n - 1]_q \cdots [1]_q$ and the $q$-binomial
coefficient as
\begin{equation}
  \binom{n}{k}_q = \frac{[n]_q !}{[k]_q ! [n - k]_q !} . \label{eq:qbin}
\end{equation}
It is easy to see from here that the $q$-binomial coefficient is a
self-reciprocal polynomial in $q$ of degree $k (n - k)$. The $q$-binomial
coefficient occurs naturally in many contexts \cite{kc-q}; for instance, if
$q$ is a prime power, then \eqref{eq:qbin} counts the number of
$k$-dimensional subspaces of an $n$-dimensional vector space over the finite
field $\mathbb{F}_q$. For our purposes, the following alternative
characterization will also be relevant. Suppose that $x$ and $y$ are
$q$-commuting in the sense that
\begin{equation}
  y x = q x y \label{eq:q:comm}
\end{equation}
(of course, in the case $q = 1$, the variables $x$ and $y$ indeed commute).
Then we have
\begin{equation}
  (x + y)^n = \sum_{k = 0}^n \binom{n}{k}_q x^k y^{n - k},
  \label{eq:qbinom:exp}
\end{equation}
extending the classical binomial expansion.

The following $q$-analog of Lucas' classical binomial congruence is proved by
G.~Olive \cite{olive-pow} and J.~D\'esarm\'enien \cite{desarmenien-q}.
More recently, it is shown in \cite{fs-qbinomial} that this result extends
to the case when $n a + b$ and $n r + s$ are allowed to be negative integers.
In \cite{abdj-cyclotomic}, B.~Adamczewski, J.~Bell, {\'E}.~Delaygue and
F.~Jouhet consider congruences modulo cyclotomic polynomials for
multidimensional $q$-factorial ratios, which generalize this and many other
Lucas-type congruences.

\begin{lemma}
  \label{lem:qlucas}For any integers $n, a, b, r, s \geq 0$ such that $b,
  s < n$,
  \begin{equation}
    \binom{a n + b}{r n + s}_q \equiv \binom{a}{r} \binom{b}{s}_q \quad
    (\operatorname{mod} \Phi_n (q)) . \label{eq:qlucas}
  \end{equation}
\end{lemma}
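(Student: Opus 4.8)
The plan is to reduce the claimed congruence to an identity at a primitive $n$th root of unity and then read that identity off from the noncommutative $q$-binomial theorem \eqref{eq:qbinom:exp}. Both sides of \eqref{eq:qlucas} are honest polynomials in $q$ with integer coefficients, and $\Phi_n(q)$ is irreducible over $\mathbb{Q}$ with nonzero constant term, so the congruence is equivalent to the numerical identity $\binom{an+b}{rn+s}_\zeta = \binom{a}{r}\binom{b}{s}_\zeta$, which it therefore suffices to verify for one (equivalently, any) primitive $n$th root of unity $\zeta$; throughout, a $q$-binomial with out-of-range lower argument is the zero polynomial, consistent with $\binom{b}{s}=0$ for $0 \le b < s$. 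The case $n=1$ is immediate ($\zeta=1$, $b=s=0$), so assume $n \ge 2$.

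The crucial input is the ``quantum freshman's dream'': if $x,y$ are $\zeta$-commuting, i.e. $yx = \zeta xy$, then $(x+y)^n = x^n + y^n$. I would get this from \eqref{eq:qbinom:exp} together with the vanishing $\binom{n}{k}_\zeta = 0$ for $0 < k < n$; the latter follows because $q^m-1 = \prod_{d \mid m}\Phi_d(q)$ shows that $\Phi_n(q)$ divides $[n]_q$ but divides none of $[1]_q,\dots,[n-1]_q$, hence divides the numerator but not the denominator of $\binom{n}{k}_q = [n]_q!/([k]_q![n-k]_q!)$. From $yx = \zeta xy$ one also gets $y x^n = \zeta^n x^n y = x^n y$, so $x^n$ and likewise $y^n$ are central in the quantum plane $A = \mathbb{Q}(\zeta)\langle x,y \rangle/(yx - \zeta xy)$, whose monomials $x^i y^j$ ($i,j \ge 0$) form a basis.

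Next I would expand $(x+y)^{an+b}$ in $A$ in two ways. Directly, \eqref{eq:qbinom:exp} gives $(x+y)^{an+b} = \sum_{k} \binom{an+b}{k}_\zeta x^k y^{an+b-k}$. On the other hand, $(x+y)^{an+b} = \big((x+y)^n\big)^a (x+y)^b = (x^n+y^n)^a (x+y)^b$; expanding $(x^n+y^n)^a$ by the ordinary binomial theorem (legitimate since $x^n$ and $y^n$ commute), $(x+y)^b$ by \eqref{eq:qbinom:exp}, and then sliding each $y^{n(a-r)}$ past $x^s$ using the centrality of $y^n$, this becomes $\sum_{r,s} \binom{a}{r}\binom{b}{s}_\zeta x^{nr+s} y^{an+b-nr-s}$. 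Because $0 \le s < n$, an exponent determines the pair $(r,s)$ uniquely, so comparing the coefficient of $x^{rn+s} y^{an+b-rn-s}$ in the two expansions yields exactly $\binom{an+b}{rn+s}_\zeta = \binom{a}{r}\binom{b}{s}_\zeta$.

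The one point that needs genuine care — and where I would slow down — is the boundary bookkeeping, since the hypotheses allow $s > b$ or $r > a$. When $s > b$, the monomial $x^{rn+s} y^{\cdots}$ does not occur in the second expansion (whose inner sum runs over $0 \le s \le b$), forcing its left-hand coefficient $\binom{an+b}{rn+s}_\zeta$ to vanish, in agreement with $\binom{b}{s} = 0$; when $r > a$ one has $rn + s \ge (a+1)n > an+b$ since $b < n$, so $\binom{an+b}{rn+s}_q$ is the zero polynomial, matching $\binom{a}{r} = 0$. Once one checks that all such degenerate instances are consistent with the stated conventions, the proof is complete; everything of substance is concentrated in the two facts $\binom{n}{k}_\zeta = 0$ for $0 < k < n$ and the centrality of $x^n, y^n$.
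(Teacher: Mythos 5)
The paper does not actually prove Lemma~\ref{lem:qlucas}: it is quoted as a known result of Olive and D\'esarm\'enien (with further generalizations cited), so there is no internal proof to compare against. Your argument is correct and is essentially the classical proof of the $q$-Lucas congruence, and it fits naturally with the paper's own setup, since the key identity you use is exactly \eqref{eq:qbinom:exp} for $q$-commuting variables. The reduction to evaluation at a primitive $n$th root of unity is legitimate (both sides are polynomials, $\Phi_n$ is irreducible with nonzero constant term, so divisibility in $\mathbb{Q}[q^{\pm 1}]$ is the same as vanishing at $\zeta$), the vanishing $\binom{n}{k}_\zeta=0$ for $0<k<n$ and the centrality of $x^n,y^n$ are correctly justified, and your boundary bookkeeping for $s>b$ and $r>a$ is exactly the point that needs checking and is handled correctly (the uniqueness of $(r,s)$ from the exponent $rn+s$ with $0\le s<n$ is the crux). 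The only ingredient you assert without proof is that the monomials $x^iy^j$ form a basis of the quantum plane, which is a standard PBW-type fact but deserves a one-line justification (e.g.\ realize $x,y$ as the operators $f(u,v)\mapsto uf(u,\zeta v)$ and $f(u,v)\mapsto vf(u,v)$ on $\mathbb{Q}(\zeta)[u,v]$); alternatively one can avoid specializing $q$ altogether by running the same two-expansions argument in $\mathbb{Q}(q)$ with $q$ an indeterminate and reducing the resulting polynomial identity modulo $\Phi_n(q)$, using $(x+y)^n\equiv x^n+y^n$ and $q^n\equiv 1 \pmod{\Phi_n(q)}$. Either way, the substance of your proof is sound.
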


We next establish a polynomial analog of the classical congruence
\begin{equation}
  \binom{a p}{b p} \equiv \binom{a}{b} \quad (\operatorname{mod} p^3),
  \label{eq:ljunggren}
\end{equation}
valid for primes $p \geq 5$. Congruence \eqref{eq:ljunggren} is an
extension of Wolstenholme's congruence (which corresponds to $a = 2$ and $b =
1$) and was proved in 1952 by Ljunggren, see \cite{granville-bin97}. A
$q$-analog of \eqref{eq:ljunggren} modulo $p^2$ was proved by G.~Andrews
\cite{andrews-qcong99}, who asked for an extension modulo $p^3$. The next
congruence is such an extension.

\begin{theorem}
  \label{thm:qlunggren:x}For positive integers $n$ and nonnegative integers
  $a, b$,
  \begin{equation}
    \binom{a n}{b n}_q \equiv \binom{a}{b}_{q^{n^2}} - (a - b) b \binom{a}{b}
    \frac{n^2 - 1}{24} (q^n - 1)^2 \quad (\operatorname{mod} \Phi_n (q)^3) .
    \label{eq:qljunggren:x:intro}
  \end{equation}
\end{theorem}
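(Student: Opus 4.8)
The plan is to derive the general congruence from its special case $(a,b)=(2,1)$, i.e.\ from the $q$-Wolstenholme congruence \eqref{eq:qwol:x} established in the appendix, by decomposing $\binom{an}{bn}_q$ multiplicatively and then performing a $\Phi_n(q)$-adic expansion. We may assume $n\ge 2$, the case $n=1$ being trivial.

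First I would split the $q$-factorial according to residues modulo $n$. Writing each $j$ with $1\le j\le Nn$ uniquely as $j=in$ (for $1\le i\le N$) or $j=in+\ell$ (for $0\le i\le N-1$, $1\le\ell\le n-1$) and using $[in]_q=[i]_{q^n}[n]_q$, one gets the exact factorization
\[
  [Nn]_q!=[n]_q^{N}\,[N]_{q^n}!\,[n-1]_q!^{\,N}\prod_{i=0}^{N-1}\widehat B_i,
  \qquad
  \widehat B_i:=\prod_{\ell=1}^{n-1}\frac{[in+\ell]_q}{[\ell]_q}=\binom{(i+1)n-1}{n-1}_q .
\]
Applying this with $N=a,b,a-b$, the powers of $[n]_q$ and of $[n-1]_q!$ cancel and
\[
  \binom{an}{bn}_q=\binom{a}{b}_{q^n}\cdot\frac{\widehat B_b\widehat B_{b+1}\cdots\widehat B_{a-1}}{\widehat B_0\widehat B_1\cdots\widehat B_{a-b-1}} ,
\]
so it suffices to understand the blocks $\widehat B_i$ and the comparison of $\binom ab_{q^n}$ with $\binom ab_{q^{n^2}}$, both modulo $\Phi_n(q)^3$.

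For this I would pass to the localization $\mathbb Q[q]_{(\Phi_n(q))}$, a discrete valuation ring in which $u:=q^n-1$ is a uniformizer and each $[\ell]_q$ and $q^\ell-1$ ($1\le\ell\le n-1$) is a unit; since $\Phi_n(q)$ is prime in $\mathbb Q[q^{\pm1}]$, a congruence mod $\Phi_n(q)^3$ verified there holds in $\mathbb Q[q^{\pm1}]$. In each block, $\frac{[in+\ell]_q}{[\ell]_q}=1+\frac{q^\ell}{q^\ell-1}(q^{in}-1)$, and $q^{in}-1=u\,[i]_{q^n}$ has positive valuation, so expanding the product over $\ell$ to second order gives $\widehat B_i\equiv 1+S_1(q^{in}-1)+S_2(q^{in}-1)^2\pmod{\Phi_n(q)^3}$, where $S_1=\sum_{\ell=1}^{n-1}\frac{q^\ell}{q^\ell-1}$ and $S_2=\sum_{1\le\ell<\ell'\le n-1}\frac{q^\ell q^{\ell'}}{(q^\ell-1)(q^{\ell'}-1)}$. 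The key point is that, to the needed precision ($S_1$ modulo $\Phi_n(q)^2$ and $S_2$ modulo $\Phi_n(q)$), both are controlled by the single scalar $W:=\sum_{\ell=1}^{n-1}\frac{q^\ell}{(q^\ell-1)^2}\bmod\Phi_n(q)$: pairing $\ell\leftrightarrow n-\ell$ gives $S_1\equiv\frac{n-1}{2}-\frac W2\,u\pmod{\Phi_n(q)^2}$, and combining $S_2=\tfrac12\bigl(S_1^{2}-\sum_\ell\frac{q^{2\ell}}{(q^\ell-1)^2}\bigr)$ with the exact identity $\frac{q^{2\ell}}{(q^\ell-1)^2}-\frac{q^\ell}{(q^\ell-1)^2}=\frac{q^\ell}{q^\ell-1}$ expresses $S_2\bmod\Phi_n(q)$ in terms of $W$. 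Specializing the whole computation to $(a,b)=(2,1)$, where $\binom{2n}{n}_q=(1+q^n)\widehat B_1$, and matching it against \eqref{eq:qwol:x} forces $W\equiv-\frac{n^2-1}{12}\pmod{\Phi_n(q)}$ — a $q$-analog of Wolstenholme's second congruence. With this value, every $\widehat B_i$ is explicit modulo $\Phi_n(q)^3$; the comparison of $\binom ab_{q^n}$ and $\binom ab_{q^{n^2}}$ is then handled by Taylor expanding $\binom ab_{1+t}$ at $t=0$ and substituting $t=u$ versus $t=(1+u)^n-1$.

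The main obstacle is the bookkeeping in the final assembly: one multiplies out $\binom ab_{q^n}\cdot\widehat B_b\cdots\widehat B_{a-1}\big/\bigl(\widehat B_0\cdots\widehat B_{a-b-1}\bigr)$ modulo $\Phi_n(q)^3$, subtracts $\binom ab_{q^{n^2}}$, and must verify that what remains equals $-\frac{(a-b)b}{24}\binom ab(n^2-1)u^2$. After the $q$-harmonic sums have collapsed to $W$, this becomes a polynomial identity in $a,b,n$ that one checks directly. Conceptually the delicate step is the cancellation that leaves a single invariant $W$ governing the second-order behaviour, so that the lone input \eqref{eq:qwol:x} is enough; everything else is a lengthy but routine computation, of the same nature as the one behind Theorem~\ref{thm:A:qcong}.
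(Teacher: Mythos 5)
Your argument is correct in outline, but it takes a genuinely different route from the paper. The paper proves \eqref{eq:qljunggren:x:intro} \emph{additively}: it expands $\binom{an}{bn}_q$ by the $q$-Chu--Vandermonde convolution \eqref{eq:qchu}, uses the cyclotomic factorization \eqref{eq:qbin:fac} to sort the summands by their order of vanishing at $\Phi_n(q)$, identifies the non-divisible summands with $\binom{a}{b}_{q^{n^2}}$, and observes that each summand divisible by exactly $\Phi_n(q)^2$ contributes $\binom{2n}{n}_q-[2]_{q^{n^2}}$, with the count $\binom{a}{b+1}\binom{b+1}{2}=\frac{(a-b)b}{2}\binom{a}{b}$ of such configurations instantly producing the coefficient; everything then reduces to the central case Lemma~\ref{lem:qwol:x}. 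You instead proceed \emph{multiplicatively}, via the block factorization $\binom{an}{bn}_q=\binom{a}{b}_{q^n}\prod\widehat B_i/\prod\widehat B_i$ with $\widehat B_i=\binom{(i+1)n-1}{n-1}_q$, a $\Phi_n$-adic expansion in the localization $\mathbb{Q}[q]_{(\Phi_n(q))}$, and the reduction of the relevant $q$-harmonic sums $S_1,S_2$ to the single invariant $W=\sum_{\ell=1}^{n-1}q^\ell/(q^\ell-1)^2$; your calibration of $W\equiv-\frac{n^2-1}{12}$ against \eqref{eq:qwol:x} (rather than invoking a second Wolstenholme-type input like \eqref{eq:sp1}--\eqref{eq:sp2}) is a nice touch, and I checked that the factorization, the unit/valuation claims, the pairing identity for $S_1$, the Newton-identity reduction of $S_2$, and the calibration all hold. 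What each approach buys: the paper's counting argument delivers the coefficient $\frac{(a-b)b}{2}\binom{a}{b}$ with essentially no computation beyond the special case, whereas your route is closer in spirit to the classical Glaisher--Ljunggren block proofs and makes the $\Phi_n$-adic mechanism transparent, at the price of a heavier final assembly.

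That final assembly is the one place where your write-up falls short of a complete proof: matching the $u^2$-coefficient requires not only the first- and second-order data $g_1,g_2$ of the block ratio (which your $S_1,S_2,W$ control) but also the second-order Taylor data of $\binom{a}{b}_{1+t}$ at $t=0$ (the term $D_2(1-n^2)u^2$, with $D_2=\tfrac12\frac{d^2}{dq^2}\binom{a}{b}_q\big|_{q=1}$, does not cancel against anything you have written down and must be computed explicitly and carried through). Since every expansion you use is exact and the target identity is a polynomial identity in $a,b,n$, this verification can only succeed, so I regard it as deferred bookkeeping rather than a conceptual gap --- but it should be carried out (or at least the value of $D_2$ recorded) before the argument can stand on its own; the paper's proof avoids this entirely because the multiplicity count replaces all second-order calculus on $\binom{a}{b}_q$.
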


This result is proved in \cite{straub-qljunggren} in the case when $n$ is
prime. Note that, by the comments after \eqref{eq:cong:poly}, setting $q = 1$
in congruence \eqref{eq:qljunggren:x:intro} indeed recovers
\eqref{eq:ljunggren} for primes $p \geq 5$. Our proof of
Theorem~\ref{thm:qlunggren:x} is a straightforward extension of the
corresponding proof given in \cite{straub-qljunggren}, where congruence
\eqref{eq:qljunggren:x:intro} is proved in the case that $n$ is a prime. We
include the details of the proof in Appendix~\ref{sec:qljunggren} for the
benefit of the reader.

\section{$q$-analogs of the Ap\'ery numbers}\label{sec:A:q}

In their investigation of irrationality results on $q$-analogs of Riemann zeta
values, C.~Krattenthaler, T.~Rivoal and W.~Zudilin \cite{krz-q} implicitly
introduce a $q$-analog $A_q^{\operatorname{KRZ}} (n)$ of the Ap\'ery numbers as
follows. In order to obtain irrationality results on
\begin{equation*}
  \zeta_q (3) = \sum_{k = 1}^{\infty} \frac{q^k (1 + q^k)}{(1 - q^k)^3},
\end{equation*}
a $q$-analog of $\zeta (3)$, they consider linear forms in $1$ and $\zeta_q
(3)$. The coefficients of these linear forms are Laurent polynomials in $q$,
and $A_q^{\operatorname{KRZ}} (n)$ is the coefficient of $\zeta_q (3)$. Specifically,
the Laurent polynomials $A_q^{\operatorname{KRZ}} (n)$ and $B_q^{\operatorname{KRZ}} (n)$ are
characterized by
\begin{equation}
  \frac{1}{\log q} \sum_{k = 1}^{\infty} \frac{\md}{\md k} \left(\frac{(q^{k - n} ; q)_n^2}{(q^k ; q)_{n + 1}^2} q^k \right) =
  A_q^{\operatorname{KRZ}} (n) \zeta_q (3) - B_q^{\operatorname{KRZ}} (n) . \label{eq:A:krz}
\end{equation}
The (Laurent) polynomials $A_q^{\operatorname{KRZ}} (n)$ have the following explicit
formula as a $q$-binomial sum which visibly reduces to Ap\'ery's binomial
sum \eqref{eq:apery} when $q = 1$.

\begin{lemma}
  \label{lem:Aq:krz}With $A_q^{\operatorname{KRZ}} (n)$ defined as in
  \eqref{eq:A:krz},
  \begin{equation}
    q^{n (2 n + 1)} A_q^{\operatorname{KRZ}} (n) = \sum_{k = 0}^n q^{(n - k)^2}
    \binom{n}{k}_q^2 \binom{n + k}{k}_q^2 . \label{eq:Aq:krz:bin}
  \end{equation}
\end{lemma}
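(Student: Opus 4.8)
The plan is to verify the identity by transforming the left-hand side of \eqref{eq:A:krz} into an explicit $q$-binomial sum and matching coefficients of $\zeta_q(3)$. First I would expand the $q$-Pochhammer ratio
\[
  F_n(k) = \frac{(q^{k-n};q)_n^2}{(q^k;q)_{n+1}^2}\, q^k
\]
as a rational function of $q^k$, writing each factor $(q^{k-n};q)_n$ and $(q^k;q)_{n+1}$ in terms of $q$-binomial coefficients and powers of $q$; concretely, $(q^{k-n};q)_n$ and $(q^k;q)_{n+1}$ are products of terms $1-q^{k+j}$, so $F_n(k)$ is, up to an explicit monomial in $q^k$, a ratio of the form $\big(\prod_j (1-q^{k+j})\big)^2$ over $\big(\prod_j (1-q^{k+j})\big)^2$ with the numerator indices running $k-n,\dots,k-1$ and the denominator indices running $k,\dots,k+n$. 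I would then apply $\frac{1}{\log q}\frac{\md}{\md k}$: since $\frac{1}{\log q}\frac{\md}{\md k} q^{ck} = c\, q^{ck}$, differentiating $\log F_n(k)$ produces a sum of terms $\frac{q^{k+j}}{1-q^{k+j}}$ (with signs and the factor $2$ from the squares), i.e.
\[
  \frac{1}{\log q}\frac{\md}{\md k} F_n(k)
  = F_n(k)\Big(1 + 2\sum_{j} \frac{q^{k+j}}{1-q^{k+j}} - 2\sum_{j}\frac{q^{k+j}}{1-q^{k+j}}\Big),
\]
the two inner sums ranging over the denominator and numerator index sets respectively.

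Next I would sum over $k$ from $1$ to $\infty$ and isolate the coefficient of $\zeta_q(3)$. The key observation is that only the "diagonal" poles at $q^k$ of order three contribute to $\zeta_q(3) = \sum_{k\ge 1} q^k(1+q^k)/(1-q^k)^3$: after the telescoping cancellation between the numerator and denominator index sets, the terms with a triple pole $1/(1-q^k)^3$ coalesce, and their coefficient is precisely the value of the remaining regular part of $F_n(k)$ evaluated at $q^k$. That residue-type coefficient is, after simplification, a single $q$-binomial product; partial-fraction bookkeeping (this is exactly the $q$-partial fraction decomposition alluded to in the introduction) shows it equals $\binom{n}{k}_q^2\binom{n+k}{k}_q^2$ up to the global power of $q$. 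Tracking all the monomial prefactors — the $q^k$ in $F_n(k)$, the shift $q^{k-n}$ in the numerator Pochhammer, and the shifts inside the denominator — collects into the stated global factor $q^{n(2n+1)}$ on the left and the factor $q^{(n-k)^2}$ inside the sum on the right; I would pin these down by checking the exponent of $q$ on a single representative term (say $k=n$ or $k=0$) and using that both sides are Laurent polynomials.

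The main obstacle I anticipate is the careful bookkeeping of powers of $q$ and the precise extraction of the $\zeta_q(3)$-coefficient from the (a priori more complicated) sum over $k$ of $\frac{1}{\log q}\frac{\md}{\md k}F_n(k)$: one must argue that the contributions involving $\zeta_q(1)$- or $\zeta_q(2)$-type tails, and the non-diagonal cross terms, either cancel by telescoping or get absorbed into $B_q^{\operatorname{KRZ}}(n)$, leaving a clean coefficient for $\zeta_q(3)$. A convenient way to make this rigorous is to first establish the identity as an equality of rational functions in a formal variable for each fixed $n$ — expand $F_n(k)$ via its $q$-partial fraction decomposition with respect to $q^k$, differentiate termwise, and read off that the coefficient of $1/(1-q^k)^3$ (which is what couples to $\zeta_q(3)$) is the claimed $q$-binomial product. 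An alternative, purely verificational route is to check \eqref{eq:Aq:krz:bin} for small $n$ and then invoke the fact that both sides satisfy the same defining recurrence for $A_q^{\operatorname{KRZ}}(n)$ coming from the creative-telescoping / contiguous-relation structure behind \eqref{eq:A:krz}; I would use the direct partial-fraction computation as the primary argument and mention the recurrence check as corroboration.
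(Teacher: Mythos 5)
Your overall route is the same as the paper's: expand the ratio $(q^{-n}T;q)_n^2/(T;q)_{n+1}^2$ in $q$-partial fractions with respect to $T=q^k$, identify $A_q^{\operatorname{KRZ}}(n)$ with the double-pole coefficients (the paper quotes precisely this, namely $A_q^{\operatorname{KRZ}}(n)=\sum_k a_q(n,k)q^{-k}$, from \cite{krz-q} as \eqref{eq:A:krz:a}), and then evaluate those coefficients as $q$-binomial products. However, as written your proposal has two genuine gaps. First, the extraction of the $\zeta_q(3)$-coefficient is asserted rather than argued: applying $\frac{1}{\log q}\frac{\md}{\md k}$ to the double-pole terms $q^k/(1-q^{j+k})^2$ does produce $q^{-j}\,q^{l}(1+q^{l})/(1-q^{l})^3$ and hence $\zeta_q(3)$, but the simple-pole terms $b_q(n,j)\,q^k/(1-q^{j+k})$ produce $\zeta_q(2)$-type sums $\sum_l q^l/(1-q^l)^2$ with coefficient $\sum_j b_q(n,j)q^{-j}$; these neither telescope away automatically nor can they be ``absorbed into $B_q^{\operatorname{KRZ}}(n)$'' (a Laurent polynomial). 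One needs the additional fact, established in \cite{krz-q} from the well-poised structure of the rational function, that this combination vanishes — or one should simply cite \eqref{eq:A:krz:a}, as the paper does. Your sentence ``either cancel by telescoping or get absorbed into $B_q^{\operatorname{KRZ}}(n)$'' is exactly the point that requires proof.

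Second, the central computation is left undone, and your proposed shortcut for it would fail. The heart of the proof is the evaluation
\begin{equation*}
  \lim_{T \rightarrow q^{-k}} (1-q^kT)\,\frac{(q^{-n}T;q)_n}{(T;q)_{n+1}}
  = \frac{(q^{-1};q^{-1})_{n+k}}{(q^{-1};q^{-1})_k^2\,(q;q)_{n-k}},
\end{equation*}
followed by the conversion $(q;q)_n=(-1)^n q^{\binom{n+1}{2}}(q^{-1};q^{-1})_n$, which yields $a_q(n,k)=q^{2k(k+1)-(n+k)(n+k+1)}\binom{n}{k}_q^2\binom{n+k}{k}_q^2$ and hence, after multiplying by $q^{-k}$ and $q^{n(2n+1)}$, the exponent $(n-k)^2$. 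You cannot ``pin down'' this exponent by checking a single representative term such as $k=0$ or $k=n$: it is a $k$-dependent quadratic, not a global prefactor, so one data point (even combined with degree or reciprocality considerations) does not determine it; the exponent must be tracked for general $k$ through the residue computation. The secondary ``recurrence'' verification you mention is likewise unsupported (no recurrence for $A_q^{\operatorname{KRZ}}(n)$ is established here), but since you offer it only as corroboration, the substantive issues are the two above.
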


\begin{proof}
  It is shown in \cite{krz-q} that
  \begin{equation}
    A_q^{\operatorname{KRZ}} (n) = \sum_{k = 0}^n \frac{a_q (n, k)}{q^k},
    \label{eq:A:krz:a}
  \end{equation}
  where $a_q (n, k)$ is defined via the $q$-partial fraction decomposition
  \begin{equation*}
    \frac{(q^{- n} T ; q)_n^2}{(T ; q)_{n + 1}^2} = \sum_{j = 0}^n \left(\frac{a_q (n, k)}{(1 - q^k T)^2} + \frac{b_q (n, k)}{1 - q^k T} \right),
  \end{equation*}
  where, as usual, $(a ; q)_n = (1 - a) (1 - a q) \cdots (1 - a q^{n - 1})$
  denotes the $q$-Pochhammer symbol. Therefore, by construction,
  \begin{equation*}
    a_q (n, k) = \lim_{T \rightarrow q^{- k}} (1 - q^k T)^2 \frac{(q^{- n} T
     ; q)_n^2}{(T ; q)_{n + 1}^2} = \left[ \lim_{T \rightarrow q^{- k}} (1 -
     q^k T) \frac{(q^{- n} T ; q)_n}{(T ; q)_{n + 1}} \right]^2 .
  \end{equation*}
  Expanding the $q$-Pochhammer symbols, canceling the factor $1 - q^k T$, and
  setting $T = q^{- k}$ in the resulting expression, we find that
  \begin{equation*}
    \lim_{T \rightarrow q^{- k}} (1 - q^k T) \frac{(q^{- n} T ; q)_n}{(T ;
     q)_{n + 1}} = \frac{(q^{- 1} ; q^{- 1})_{n + k}}{(q^{- 1} ; q^{- 1})_k^2
     (q ; q)_{n - k}} .
  \end{equation*}
  Using the transformation formula
  \begin{equation*}
    (q ; q)_n = (- 1)^n q^{\binom{n + 1}{2}} (q^{- 1} ; q^{- 1})_n,
  \end{equation*}
  we observe that
  \begin{equation*}
    \frac{(q^{- 1} ; q^{- 1})_{n + k}}{(q^{- 1} ; q^{- 1})_k^2 (q ; q)_{n -
     k}} = (- 1)^{n + k} q^{2 \binom{k + 1}{2} - \binom{n + k + 1}{2}}
     \binom{n}{k}_q \binom{n + k}{k}_q .
  \end{equation*}
  We hence conclude that
  \begin{equation*}
    a_q (n, k) = q^{2 k (k + 1) - (n + k) (n + k + 1)} \binom{n}{k}_q^2
     \binom{n + k}{k}_q^2,
  \end{equation*}
  which, in combination with \eqref{eq:A:krz:a}, implies
  \eqref{eq:Aq:krz:bin}.
\end{proof}

Investigating $q$-analogs of a result of S.~Ahlgren and K.~Ono
\cite{ahlgren-ono-apery} and W.~Chu \cite{chu-apery} pertaining to a
modular supercongruence that was conjectured by F.~Beukers
\cite{beukers-apery87} (see Section~\ref{sec:concl} and \eqref{eq:A:scm} for
further details), D.~Zheng \cite{zheng-qapery} introduces the $q$-Ap\'ery
numbers
\begin{equation}
  \sum_{k = 0}^n q^{k (k - 2 n)} \binom{n}{k}_q^2 \binom{n + k}{k}_q^2 = q^{n
  (n + 1)} A_q^{\operatorname{KRZ}} (n) . \label{eq:Aq:z}
\end{equation}
and obtains the identity
\begin{equation}
  \sum_{k = 0}^n q^{k (k - 2 n)} \binom{n}{k}_q^2 \binom{n + k}{k}_q^2 \{ 2
  H_q (k) - H_q (n + k) - q H_{1 / q} (n - k) \} = 0, \label{eq:Aq:chu}
\end{equation}
involving the $q$-harmonic numbers
\begin{equation*}
  H_q (n) = \sum_{k = 1}^n \frac{1}{[k]_q} .
\end{equation*}
In the remainder of this section, we illustrate that the $q$-Ap\'ery numbers
considered by Krattenthaler--Rivoal--Zudilin and Zheng can also be obtained as
special cases of a natural construction based on introducing $q$-commuting
variables.

It was shown in \cite{s-apery} that the Ap\'ery numbers are the diagonal
Taylor coefficients of the rational function
\begin{equation}
  \frac{1}{(1 - x_1 - x_2) (1 - x_3 - x_4) - x_1 x_2 x_3 x_4} = \sum_{n_1,
  n_2, n_3, n_4 \geq 0} A (\boldsymbol{n}) \boldsymbol{x}^{\boldsymbol{n}} .
  \label{eq:apery-rf}
\end{equation}
Here, $\boldsymbol{n}= (n_1, \ldots, n_4)$ and $\boldsymbol{x}^{\boldsymbol{n}} =
x_1^{n_1} x_2^{n_2} x_3^{n_3} x_4^{n_4}$. The multivariate Ap\'ery numbers
$A (\boldsymbol{n})$ featuring in this expansion have the explicit
representation
\begin{equation}
  A (\boldsymbol{n}) = \sum_{k \in \mathbb{Z}} \binom{n_1}{k} \binom{n_3}{k}
  \binom{n_1 + n_2 - k}{n_1} \binom{n_3 + n_4 - k}{n_3} . \label{eq:apery4x}
\end{equation}
\begin{example}
  \label{eg:Aq:sym}In search for a natural $q$-analog of these multivariate
  Ap\'ery numbers, it is reasonable to consider sums of $q$-binomial
  coefficients of the form
  \begin{equation*}
    \sum_{k \geq 0} q^{h (\boldsymbol{n}; k)} \binom{n_1}{k}_q
     \binom{n_3}{k}_q \binom{n_1 + n_2 - k}{n_1}_q \binom{n_3 + n_4 -
     k}{n_3}_q,
  \end{equation*}
  where $h (\boldsymbol{n}; k)$ is a polynomial in $\boldsymbol{n}$ and $k$,
  taking integer values. We note that a particularly natural choice is $h
  (\boldsymbol{n}; k) = k^2$ because then the resulting $q$-numbers are
  self-reciprocal polynomials in $q$ of degree $n_1 n_2 + n_3 n_4$. To see
  this, observe that
  \begin{equation*}
    \deg \binom{n_1}{k}_q \binom{n_3}{k}_q \binom{n_1 + n_2 - k}{n_1}_q
     \binom{n_3 + n_4 - k}{n_3}_q = n_1 n_2 + n_3 n_4 - 2 k^2 .
  \end{equation*}
  Consequently, since each $q$-binomial is self-reciprocal, $q^{k^2}$ times
  this product of four $q$-binomials is a polynomial $z (q)$ satisfying $z (q)
  = q^{n_1 n_2 + n_3 n_4} z (1 / q)$.
\end{example}

A more organic approach to obtaining the $q$-analog in
Example~\ref{eg:Aq:sym}, as well as other variations, is based on the
observation made in \cite{s-apery} that, by MacMahon's Master Theorem
\cite{macmahon-comb}, the power series expansion \eqref{eq:apery-rf}
defining the multivariate Ap\'ery numbers $A (\boldsymbol{n})$ is equivalent
to
\begin{equation*}
  A (\boldsymbol{n}) = [\boldsymbol{x}^{\boldsymbol{n}}] (x_1 + x_2 + x_3)^{n_1}
   (x_1 + x_2)^{n_2} (x_3 + x_4)^{n_3} (x_2 + x_3 + x_4)^{n_4},
\end{equation*}
where $A (\boldsymbol{n})$ is represented as the coefficient of
$\boldsymbol{x}^{\boldsymbol{n}}$ in a certain polynomial.

In light of the $q$-binomial expansion \eqref{eq:qbinom:exp}, it is natural to
define a $q$-analog of $A (\boldsymbol{n})$ by extracting the coefficient of
$\boldsymbol{x}^{\boldsymbol{n}}$ in the product of $(x_1 + x_2 + x_3)^{n_1}$,
$(x_1 + x_2)^{n_2}$, $(x_3 + x_4)^{n_3}$ and $(x_2 + x_3 + x_4)^{n_4}$, where
now the variables $x_i$ are assumed to be $q$-commuting in the spirit of
\eqref{eq:q:comm}. Depending on the exact choices, including the order of the
factors, one obtains different $q$-analogs.

\begin{example}
  \label{eg:Aq:mm:nonsym}Suppose that, for all $i < j$, the variables $x_i$
  commute according to $x_j x_i = q x_i x_j$. Let $A_q (\boldsymbol{n})$ denote
  the coefficient of $\boldsymbol{x}^{\boldsymbol{n}}$ in the product
  \begin{equation*}
    (x_1 + x_2 + x_3)^{n_1} (x_1 + x_2)^{n_2} (x_3 + x_4)^{n_3} (x_2 + x_3 +
     x_4)^{n_4} .
  \end{equation*}
  Then, using the $q$-binomial expansion \eqref{eq:qbinom:exp}, we obtain that
  \begin{equation*}
    A_q (\boldsymbol{n}) = \sum_{k = 0}^{\min (\boldsymbol{n})} q^{k (n_2 + n_3 +
     k)} \binom{n_1}{k}_q \binom{n_3}{k}_q \binom{n_1 + n_2 - k}{n_1}_q
     \binom{n_3 + n_4 - k}{n_4}_q .
  \end{equation*}
\end{example}

\begin{example}
  \label{eg:Aq:mm:sym}On the other hand, if we modify the previous example to
  let $A_q (\boldsymbol{n})$ denote the coefficient of
  $\boldsymbol{x}^{\boldsymbol{n}}$ in the product
  \begin{equation*}
    (x_1 + x_2)^{n_2} (x_1 + x_2 + x_3)^{n_1} (x_2 + x_3 + x_4)^{n_4} (x_3 +
     x_4)^{n_3},
  \end{equation*}
  then we find that
  \begin{equation*}
    A_q (\boldsymbol{n}) = \sum_{k = 0}^{\min (\boldsymbol{n})} q^{k^2}
     \binom{n_1}{k}_q \binom{n_3}{k}_q \binom{n_1 + n_2 - k}{n_1}_q \binom{n_3
     + n_4 - k}{n_4}_q,
  \end{equation*}
  which is the symmetric $q$-analog corresponding to the choice $h
  (\boldsymbol{n}; k) = k^2$ in Example~\ref{eg:Aq:sym}. Specializing to $n_1 =
  n_2 = n_3 = n_4 = n$, the polynomial $A_q (\boldsymbol{n})$ equals
  \eqref{eq:Aq:krz:bin}, and we recover the $q$-Ap\'ery numbers considered
  by Krattenthaler--Rivoal--Zudilin and Zheng.
\end{example}

In the next section, we will show that all of the $q$-analogs, including the
multivariate ones from Examples~\ref{eg:Aq:mm:nonsym} and \ref{eg:Aq:mm:sym},
discussed in this section satisfy supercongruences generalizing the
congruences \eqref{eq:apery-sc}.

\section{Proof of the supercongruences}\label{sec:super}

This section is concerned with proving the following multivariate
generalization of Corollary~\ref{cor:A:qcong}. This generalization is inspired
by the corresponding classical congruences, which were shown in
\cite{s-apery} and which can be obtained from Theorem~\ref{thm:A:qcong} by
specializing $q = 1$.

\begin{theorem}
  \label{thm:A:qcong}For $\alpha \in \mathbb{Z} [\boldsymbol{n}, k]$ such that
  $\alpha (m\boldsymbol{n}, m k) = m^2 \alpha (\boldsymbol{n}, k)$ and $\alpha
  (\boldsymbol{0}, k) = k^2$, define
  \begin{equation}
    A_q^{(\alpha)} (\boldsymbol{n}) = \sum_{k \in \mathbb{Z}} q^{\alpha
    (\boldsymbol{n}, k)} \binom{n_1}{k}_q \binom{n_3}{k}_q \binom{n_1 + n_2 -
    k}{n_1}_q \binom{n_3 + n_4 - k}{n_3}_q . \label{eq:A:mult}
  \end{equation}
  Then, for any positive integer $m$,
  \begin{equation}
    A_q^{(\alpha)} (m\boldsymbol{n}) \equiv A_{q^{m^2}}^{(\alpha)}
    (\boldsymbol{n}) - \frac{m^2 - 1}{12} (q^m - 1)^2 R (\boldsymbol{n}) \quad
    (\operatorname{mod} \Phi_m (q)^3), \label{eq:A:qcong}
  \end{equation}
  where the numbers
  \begin{equation*}
    R (\boldsymbol{n}) = \frac{n_1 n_2 + n_3 n_4}{2} A_1^{(\alpha)}
     (\boldsymbol{n})
  \end{equation*}
  are independent of $\alpha$ and $m$.
\end{theorem}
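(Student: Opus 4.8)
The plan is to reduce the multivariate congruence to the single-variable building block provided by Theorem~\ref{thm:qlunggren:x}, applied to each of the four $q$-binomial factors separately, and then to collect the resulting error terms. Fix a positive integer $m$ and work in $\mathbb{Q}[q^{\pm1}]$ modulo $\Phi_m(q)^3$. The summand of $A_q^{(\alpha)}(m\boldsymbol n)$ at index $k$ is nonzero only when $k$ is such that all four $q$-binomials are nonzero; I would first argue, using Lemma~\ref{lem:qlucas} (the $q$-Lucas congruence) applied to each factor $\binom{mn_i}{k}_q$ etc., that modulo $\Phi_m(q)$ the summand vanishes unless $m\mid k$, say $k=m\ell$. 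Thus, modulo $\Phi_m(q)^3$, only the indices $k=m\ell$ contribute to the congruence (the other terms are divisible by $\Phi_m(q)$, but I need them divisible by $\Phi_m(q)^3$; this requires the sharper statement that each such term is actually divisible by $\Phi_m(q)^3$ because at least one — in fact, generically all four — of its $q$-binomial factors is divisible by $\Phi_m(q)$, and a short case analysis shows the total multiplicity is at least $3$). This is the step I expect to be the main obstacle: controlling the $\Phi_m(q)$-adic valuation of the ``off-lattice'' terms $k\not\equiv 0\pmod m$ and showing it is always $\geq 3$, so they may be discarded.

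For the surviving terms $k=m\ell$, I would apply Theorem~\ref{thm:qlunggren:x} to each of the four factors. Writing $Q=q^{m^2}$ for brevity, that theorem gives, for instance,
\begin{equation*}
  \binom{mn_1}{m\ell}_q \equiv \binom{n_1}{\ell}_Q - (n_1-\ell)\ell\binom{n_1}{\ell}\frac{m^2-1}{24}(q^m-1)^2 \pmod{\Phi_m(q)^3},
\end{equation*}
and analogously for $\binom{mn_3}{m\ell}_q$, $\binom{mn_1+mn_2-m\ell}{mn_1}_q$ and $\binom{mn_3+mn_4-m\ell}{mn_3}_q$. Each of these has the shape $B_i(Q) - \epsilon_i$ with $\epsilon_i$ a multiple of $(q^m-1)^2$, hence of $\Phi_m(q)^2$; when the four are multiplied together, all cross terms $\epsilon_i\epsilon_j$ are divisible by $\Phi_m(q)^4$ and vanish, so the product is congruent to $\prod_i B_i(Q) - \sum_i \epsilon_i \prod_{j\neq i} B_j(1) \pmod{\Phi_m(q)^3}$, where I use that $Q\equiv 1\pmod{\Phi_m(q)}$ to replace the $B_j(Q)$ by $B_j(1)$ in the error term. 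The leading product $\prod_i B_i(Q)$ is exactly the summand of $A_{q^{m^2}}^{(\alpha)}(\boldsymbol n)$ at index $\ell$, using the hypothesis $\alpha(m\boldsymbol n, m\ell)=m^2\alpha(\boldsymbol n,\ell)$ so that $q^{\alpha(m\boldsymbol n, m\ell)}=Q^{\alpha(\boldsymbol n,\ell)}$. (I also need $q^{\alpha(m\boldsymbol n,m\ell)}\equiv 1 + \alpha(0,\ell)(\,\cdot\,)$-type expansions to be consistent — more precisely, the hypothesis $\alpha(\boldsymbol 0,k)=k^2$ is what pins down the coefficient in the error term, via $q^{\alpha}\equiv q^{m^2 k^2}$-adjustments; I would expand $q^{\alpha(m\boldsymbol n,m\ell)}=Q^{\alpha(\boldsymbol n,\ell)}$ exactly, so in fact no approximation of the prefactor is needed here.)

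It remains to sum over $\ell$ and identify the error term with $-\frac{m^2-1}{12}(q^m-1)^2 R(\boldsymbol n)$. Collecting the four contributions, the coefficient of $-\frac{m^2-1}{24}(q^m-1)^2$ multiplying $\prod_j B_j(1) = \binom{n_1}{\ell}\binom{n_3}{\ell}\binom{n_1+n_2-\ell}{n_1}\binom{n_3+n_4-\ell}{n_3}$ is
\begin{equation*}
  (n_1-\ell)\ell + (n_3-\ell)\ell + (n_2)(n_1) \cdot[\text{from }\binom{n_1+n_2-\ell}{n_1}] + n_4 n_3,
\end{equation*}
which after the substitution $a=n_1+n_2-\ell$, $b=n_1$ (so $(a-b)b=(n_2-\ell)n_1$) becomes $(n_1+n_3)\ell - 2\ell^2 + (n_2-\ell)n_1 + (n_4-\ell)n_3 = n_1 n_2 + n_3 n_4 - 2\ell(n_1+n_3) - 2\ell^2 + (n_1+n_3)\ell$; a short simplification, together with the combinatorial identity of ``Vandermonde/absorption'' type that lets one replace $\ell$-weighted sums of the product of four binomials by $\frac{n_1 n_2 + n_3 n_4}{2}$ times the unweighted sum (this is the $q=1$ analog of the identity used in the Example for central binomials, and is exactly what makes $R(\boldsymbol n) = \frac{n_1 n_2+n_3 n_4}{2}A_1^{(\alpha)}(\boldsymbol n)$), collapses everything to the claimed closed form. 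I would verify the key identity $\sum_k \bigl(2\ell(n_1+n_3)+2\ell^2 - (n_1+n_3)\ell\bigr)\binom{n_1}{\ell}\binom{n_3}{\ell}\binom{n_1+n_2-\ell}{n_1}\binom{n_3+n_4-\ell}{n_3} = \frac{n_1n_2+n_3n_4}{2}A_1(\boldsymbol n)$ by a creative-telescoping / Zeilberger certificate, or by recognizing it as a derivative of the rational-function generating identity \eqref{eq:apery-rf}; this is routine but is the computational heart of the error-term bookkeeping. Setting $q=1$, $m=p$, $\boldsymbol n=(n,n,n,n)$ then recovers Corollary~\ref{cor:A:qcong} and, in turn, \eqref{eq:apery-sc}.
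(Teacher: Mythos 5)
Your treatment of the terms with $m\mid k$ is essentially the paper's treatment of its sum $S_q^{(1)}$: apply Theorem~\ref{thm:qlunggren:x} to each of the four factors, note that the error terms are divisible by $\Phi_m(q)^2$ so cross terms die, and use $\alpha(m\boldsymbol n,mk)=m^2\alpha(\boldsymbol n,k)$ to identify the main term with the summand of $A_{q^{m^2}}^{(\alpha)}(\boldsymbol n)$. But the step you yourself flag as the main obstacle is where the proposal genuinely breaks: the off-lattice terms $k=mk'+j$ with $0<j<m$ are \emph{not} divisible by $\Phi_m(q)^3$. By the square-free cyclotomic factorization \eqref{eq:qbin:fac}, $\Phi_m(q)$ divides $\binom{mn_1}{mk'+j}_q$ and $\binom{mn_3}{mk'+j}_q$ exactly once each, while it does not divide $\binom{mn_1+mn_2-k}{mn_1}_q$ or $\binom{mn_3+mn_4-k}{mn_3}_q$ at all (the relevant floor count $\lfloor\cdot\rfloor-\lfloor\cdot\rfloor-\lfloor\cdot\rfloor$ is $0$ for these). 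So each such summand has $\Phi_m(q)$-adic valuation exactly $2$, and the proposed ``short case analysis showing multiplicity at least $3$'' cannot exist. These terms must be evaluated, not discarded; the paper does this with Proposition~\ref{prop:qbin:cong:2} together with the $q$-Wolstenholme-type sums \eqref{eq:sp1}--\eqref{eq:sp2}, obtaining a genuine contribution $-\frac{m^2-1}{12}(q^m-1)^2\sum_{k}k^2C(\boldsymbol n;k)$ modulo $\Phi_m(q)^3$.

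This also kills the final step of your bookkeeping. After fixing the algebra slip, the four Ljunggren error terms at index $\ell$ combine to the weight $\frac{n_1n_2+n_3n_4}{2}-\ell^2$ (not your expression), so the diagonal terms alone give the error $\sum_{\ell}\bigl(\frac{n_1n_2+n_3n_4}{2}-\ell^2\bigr)C(\boldsymbol n;\ell)$, which differs from the claimed $R(\boldsymbol n)=\frac{n_1n_2+n_3n_4}{2}A_1^{(\alpha)}(\boldsymbol n)$ by $\sum_{\ell}\ell^2C(\boldsymbol n;\ell)$, a strictly positive quantity; no Vandermonde/absorption or creative-telescoping identity can make it vanish, so the identity you propose to verify is false. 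The missing piece $\sum_k k^2C(\boldsymbol n;k)$ is exactly what the off-lattice terms supply, and it is precisely this cancellation between the two parts of the sum that produces the clean form of $R(\boldsymbol n)$. To repair the proof you should replace the ``discard off-lattice terms'' step by the paper's analysis of $S_q^{(2)}$ (or an equivalent evaluation of those terms modulo $\Phi_m(q)^3$).
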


\begin{proof}
  We have
  \begin{equation*}
    A_q^{(\alpha)} (\boldsymbol{n}) = \sum_{k \geq 0} B_q (\boldsymbol{n};
     k)
  \end{equation*}
  with
  \begin{equation*}
    B_q (\boldsymbol{n}; k) = q^{\alpha (\boldsymbol{n}, k)} \binom{n_1}{k}_q
     \binom{n_3}{k}_q \binom{n_1 + n_2 - k}{n_1}_q \binom{n_3 + n_4 -
     k}{n_3}_q .
  \end{equation*}
  Following the approach of \cite{gessel-super}, we write
  \begin{equation}
    A_q^{(\alpha)} (m\boldsymbol{n}) = S_q^{(1)} (\boldsymbol{n}) + S_q^{(2)}
    (\boldsymbol{n}), \label{eq:A:S12}
  \end{equation}
  where
  \begin{equation*}
    S_q^{(1)} (\boldsymbol{n}) = \sum_{m|k} B_q (m\boldsymbol{n}; k), \quad
     S_q^{(2)} (\boldsymbol{n}) = \sum_{j = 1}^{m - 1} \sum_{k \geq 0} B_q
     (m\boldsymbol{n}; m k + j) .
  \end{equation*}
  Applying Theorem~\ref{thm:qlunggren:x}, the $q$-analog of Ljunggren's
  binomial congruence, to each of the binomial coefficients in the summand of
  $S_q^{(1)} (\boldsymbol{n})$ and simplifying (keeping in mind that $q^m \equiv
  1$ modulo $\Phi_m (q)$), we obtain that
  \begin{equation}
    S_q^{(1)} (\boldsymbol{n}) \equiv A_{q^{m^2}}^{(\alpha)} (\boldsymbol{n}) -
    \frac{m^2 - 1}{12} (q^m - 1)^2 R^{(1)} (\boldsymbol{n}) \quad (\operatorname{mod}
    \Phi_m (q)^3), \label{eq:S1}
  \end{equation}
  where
  \begin{eqnarray*}
    R^{(1)} (\boldsymbol{n}) & = & \sum_{k \geq 0} \left[ \frac{(n_1 - k)
    k}{2} + \frac{(n_3 - k) k}{2} + \frac{(n_2 - k) n_1}{2} + \frac{(n_4 - k)
    n_3}{2} \right] C (\boldsymbol{n}; k)\\
    & = & \sum_{k \geq 0} \left[ \frac{n_1 n_2 + n_3 n_4}{2} - k^2
    \right] C (\boldsymbol{n}; k)
  \end{eqnarray*}
  with
  \begin{equation}
    C (\boldsymbol{n}; k) = \binom{n_1}{k} \binom{n_3}{k} \binom{n_1 + n_2 -
    k}{n_1} \binom{n_3 + n_4 - k}{n_3} . \label{eq:A:summand}
  \end{equation}
  Note that $q^m \equiv 1$ modulo $\Phi_m (q)$, which is why the term
  $q^{\alpha (m\boldsymbol{n}, m k)} = q^{m^2 \alpha (\boldsymbol{n}, k)}$ in $B_q
  (m\boldsymbol{n}; m k)$ does not contribute to $R^{(1)} (\boldsymbol{n})$.
  
  It therefore remains to consider $S_q^{(2)} (\boldsymbol{n})$ modulo $\Phi_m
  (q)^3$. It is proved in Proposition~\ref{prop:qbin:cong:2} that
  \begin{equation*}
    \binom{m n}{m k + j}_q \equiv (- 1)^{j - 1} q^{(j - 1) (m - j / 2)}
     \frac{[m n]_q}{[j]_q} \binom{n - 1}{k} \quad (\operatorname{mod} \Phi_m (q)^2) .
  \end{equation*}
  On the other hand, it follows from Lemma~\ref{lem:qlucas}, the $q$-analog of
  Lucas' congruence, that, for $0 < j \leq m$,
  \begin{equation*}
    \binom{m (n + k) - j}{m n}_q = \binom{m (n + k - 1) + (m - j)}{m n}_q
     \equiv \binom{n + k - 1}{n} \quad (\operatorname{mod} \Phi_m (q)) .
  \end{equation*}
  Note that the congruence holds trivially in the case $n = k = 0$.
  
  Thus, modulo $\Phi_m (q)^3$,
  \begin{eqnarray}
    S_q^{(2)} (\boldsymbol{n}) & \equiv & \sum_{j = 1}^{m - 1} \sum_{k \geq
    0} q^{\alpha (m\boldsymbol{n}, m k + j)} q^{(j - 1) (2 m - j)} \frac{[m
    n_1]_q [m n_3]_q}{[j]_q^2} \nonumber\\
    &  & \times \binom{n_1 - 1}{k} \binom{n_3 - 1}{k} \binom{n_1 + n_2 - k -
    1}{n_1} \binom{n_3 + n_4 - k - 1}{n_3} \nonumber\\
    & \equiv & \frac{[m n_1]_q [m n_3]_q}{n_1 n_3} \sum_{j = 1}^{m - 1}
    \frac{q^j}{[j]_q^2} \sum_{k \geq 0} (k + 1)^2 C (\boldsymbol{n}; k +
    1),  \label{eq:S2:0}
  \end{eqnarray}
  with $C (\boldsymbol{n}; k)$ as in \eqref{eq:A:summand}. For the second
  congruence, we used the presence of the term $[m n_1]_q [m n_3]_q$, which is
  divisible by $\Phi_m (q)^2$, together with
  \begin{equation*}
    q^{\alpha (m\boldsymbol{n}, m k + j)} q^{(j - 1) (2 m - j)} \equiv
     q^{\alpha (\boldsymbol{0}, j)} q^{j - j^2} \equiv q^j \quad (\operatorname{mod}
     \Phi_m (q)),
  \end{equation*}
  to reduce the powers of $q$ modulo $\Phi_m (q)^3$.
  
  Note that it follows from the simple observation
  \begin{equation*}
    \sum_{j = 1}^{m - 1} \frac{q^j}{[j]_q^2} = \sum_{j = 1}^{m - 1}
     \frac{1}{[j]_q^2} + (q - 1) \sum_{j = 1}^{m - 1} \frac{1}{[j]_q},
  \end{equation*}
  combined with the congruences \eqref{eq:sp1} and \eqref{eq:sp2}, that, for
  all integers $m$,
  \begin{equation*}
    \sum_{j = 1}^{m - 1} \frac{q^j}{[j]_q^2} \equiv - \frac{m^2 - 1}{12} (q -
     1)^2 \quad (\operatorname{mod} \Phi_m (q)) .
  \end{equation*}
  Further using that
  \begin{equation*}
    (q - 1)^2 [m n_1]_q [m n_3]_q \equiv (q^m - 1)^2 n_1 n_3 \quad
     (\operatorname{mod} \Phi_m (q)^3),
  \end{equation*}
  we conclude from \eqref{eq:S2:0} that
  \begin{equation}
    S_q^{(2)} (\boldsymbol{n}) \equiv - \frac{m^2 - 1}{12} (q^m - 1)^2 \sum_{k
    \geq 0} k^2 C (\boldsymbol{n}; k) \quad (\operatorname{mod} \Phi_m (q)^3) .
    \label{eq:S2}
  \end{equation}
  Combining \eqref{eq:A:S12}, \eqref{eq:S1} and \eqref{eq:S2}, we therefore
  have
  \begin{equation*}
    A_q^{(\alpha)} (m\boldsymbol{n}) \equiv A_{q^{m^2}}^{(\alpha)}
     (\boldsymbol{n}) - \frac{m^2 - 1}{12} (q^m - 1)^2 \frac{n_1 n_2 + n_3
     n_4}{2} \sum_{k \geq 0} C (\boldsymbol{n}; k) \quad (\operatorname{mod} \Phi_m
     (q)^3),
  \end{equation*}
  which is the claimed congruence \eqref{eq:A:qcong}.
\end{proof}

\begin{proposition}
  \label{prop:qbin:cong:2}For nonnegative integers $j, k, m, n$, with $0 < j <
  m$,
  \begin{equation*}
    \binom{m n}{m k + j}_q \equiv (- 1)^{j - 1} q^{(j - 1) (m - j / 2)}
     \frac{[m n]_q}{[j]_q} \binom{n - 1}{k} \quad (\operatorname{mod} \Phi_m (q)^2) .
  \end{equation*}
\end{proposition}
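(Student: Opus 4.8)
The plan is to establish the claimed congruence for $\binom{mn}{mk+j}_q$ modulo $\Phi_m(q)^2$ by writing the $q$-binomial coefficient as a ratio of $q$-factorials and carefully grouping the factors $[\,\cdot\,]_q$ according to their divisibility by $\Phi_m(q)$. Recall that $\Phi_m(q) \mid [\ell]_q$ precisely when $m \mid \ell$. So among the factors of $[mn]_q!$ that appear, the ones divisible by $\Phi_m(q)$ are exactly $[m]_q, [2m]_q, \ldots, [mn]_q$, and similarly for the denominator $q$-factorials $[mk+j]_q!$ and $[mn-mk-j]_q!$. Since $0<j<m$, the denominator $[mk+j]_q!$ contains the multiples $[m]_q,\ldots,[mk]_q$ (that is, $k$ of them) and $[mn-mk-j]_q!$ contains $[m]_q,\ldots,[m(n-k-1)]_q$ (that is, $n-k-1$ of them). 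Hence the total $\Phi_m(q)$-divisibility of the numerator is $n$ and of the denominator is $k+(n-k-1)=n-1$, so $\binom{mn}{mk+j}_q$ is divisible by $\Phi_m(q)$ exactly once — which is why a congruence modulo $\Phi_m(q)^2$ is the natural statement. Factoring out one copy of $[mn]_q$ from the numerator, I would write
\begin{equation*}
  \binom{mn}{mk+j}_q = [mn]_q \cdot \frac{[mn-1]_q!}{[mk+j]_q!\,[mn-mk-j]_q!},
\end{equation*}
and the remaining fraction is now a unit modulo $\Phi_m(q)$, so it suffices to evaluate it modulo $\Phi_m(q)$.

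The key step is then to evaluate that unit part modulo $\Phi_m(q)$, i.e.\ essentially to compute $\binom{mn-1}{mk+j}_q$ up to the single factor I pulled out, using the $q$-Lucas congruence of Lemma~\ref{lem:qlucas}. Write $mn-1 = m(n-1)+(m-1)$ and $mk+j = mk+j$ with $0<j<m$, so $j<m$ and also the "remainder" of $mn-1$ is $m-1$. Then Lemma~\ref{lem:qlucas} gives
\begin{equation*}
  \binom{mn-1}{mk+j}_q = \binom{m(n-1)+(m-1)}{mk+j}_q \equiv \binom{n-1}{k}\binom{m-1}{j}_q \quad (\operatorname{mod}\ \Phi_m(q)).
\end{equation*}
Relating $\binom{mn-1}{mk+j}_q$ to the fraction above, one has $\binom{mn}{mk+j}_q = \frac{[mn]_q}{[mk+j]_q}\binom{mn-1}{mk+j-1}_q$, or more directly $[mk+j]_q \binom{mn}{mk+j}_q = [mn]_q \binom{mn-1}{mk+j-1}_q$; I would pick whichever factorization makes the bookkeeping cleanest, but the upshot is that modulo $\Phi_m(q)^2$,
\begin{equation*}
  \binom{mn}{mk+j}_q \equiv \frac{[mn]_q}{[j]_q}\binom{n-1}{k}\cdot(\text{a power of }q),
\end{equation*}
where the power of $q$ comes from reconciling $[mk+j]_q \equiv q^{mk}[j]_q \pmod{\Phi_m(q)}$ (since $q^m\equiv 1$) together with the $q$-Lucas evaluation of $\binom{m-1}{j}_q$ modulo $\Phi_m(q)$, which by the self-reciprocity and explicit small-degree form contributes the sign $(-1)^{j}$ and an exponent. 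Tracking these $q$-powers is the bulk of the computation: one uses $\binom{m-1}{j}_q = \prod_{i=1}^{j}\frac{[m-i]_q}{[i]_q}$ and, modulo $\Phi_m(q)$, $[m-i]_q \equiv -q^{-i}[i]_q$, which yields $\binom{m-1}{j}_q \equiv (-1)^j q^{-\binom{j+1}{2}} \pmod{\Phi_m(q)}$, giving the sign $(-1)^{j-1}$ after combining with the factor from $\binom{mn-1}{\cdots}$ versus $\binom{mn}{\cdots}$, and the exponent $(j-1)(m-j/2)$ after combining all the $q$-shifts.

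The main obstacle I anticipate is precisely this exponent bookkeeping: assembling the various powers of $q$ (from shifting $[mk+j]_q$ to $q^{mk}[j]_q$, from the $q$-Lucas evaluation of the small $q$-binomial, and from whichever Pascal-type recurrence I use to move between $\binom{mn}{mk+j}_q$ and $\binom{mn-1}{mk+j-1}_q$) into the single clean exponent $(j-1)(m-j/2)$, while also verifying that all the terms I discard are genuinely divisible by a second copy of $\Phi_m(q)$. The divisibility discarding is routine given the counting of $\Phi_m$-factors above; the exponent requires care but is elementary. A minor point to check is the degenerate case $n=k=0$ (where both sides vanish) and the ranges of $k$ for which $\binom{n-1}{k}$ is nonzero, but these cause no real difficulty. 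I would finish by double-checking the final formula against the Wolstenholme-type special case $m$ prime, $n=k$ small, which is the case treated in \cite{straub-qljunggren}.
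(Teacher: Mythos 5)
Your proposal is correct and follows essentially the same route as the paper's proof: extract the factor $[m n]_q / [m k + j]_q$ (or, in your alternative bookkeeping, $[m n]_q / [m n - m k - j]_q$) via a Pascal-type identity, reduce the remaining $q$-binomial with the $q$-Lucas congruence of Lemma~\ref{lem:qlucas}, and evaluate the small $q$-binomial $\binom{m-1}{j-1}_q$ (resp.\ $\binom{m-1}{j}_q$) modulo $\Phi_m(q)$ using $[m - i]_q \equiv - q^{-i} [i]_q$. The exponent reconciliation you defer is indeed routine, since the prefactor $[m n]_q$ is divisible by $\Phi_m(q)$ and hence the power of $q$ only needs to be matched modulo $m$, where $(j-1)(m - j/2) \equiv -\binom{j}{2} \pmod{m}$ agrees with the exponent your computation produces.
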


\begin{proof}
  We have
  \begin{eqnarray*}
    \binom{m n}{m k + j}_q & = & \frac{[m n]_q}{[m k + j]_q} \binom{m n - 1}{m
    k + j - 1}_q\\
    & \equiv & \frac{[m n]_q}{[j]_q} \binom{m n - 1}{m k + j - 1}_q \quad
    (\operatorname{mod} \Phi_m (q)^2),
  \end{eqnarray*}
  because $[a b]_q = [a]_{q^b} [b]_q$ and $[a + b]_q = [a]_q + q^a [b]_q$.
  Moreover, using Lemma~\ref{lem:qlucas}, the $q$-analog of Lucas' binomial
  congruence, if $0 < j < m$,
  \begin{eqnarray*}
    \binom{m n - 1}{m k + j - 1}_q & = & \binom{m (n - 1) + m - 1}{m k + j -
    1}_q\\
    & \equiv & \binom{n - 1}{k} \binom{m - 1}{j - 1}_q \quad (\operatorname{mod}
    \Phi_m (q)) .
  \end{eqnarray*}
  On the other hand,
  \begin{eqnarray*}
    \binom{m - 1}{j - 1}_q & = & \prod_{k = 1}^{j - 1} \frac{[m - k]_q}{[k]_q}
    = \prod_{k = 1}^{j - 1} \frac{[m]_q - q^{m - k} [k]_q}{[k]_q}\\
    & \equiv & (- 1)^{j - 1} q^{(j - 1) (2 m - j) / 2} \quad (\operatorname{mod}
    \Phi_m (q)) .
  \end{eqnarray*}
  Combined, the claim follows.
\end{proof}

\section{Outlook and open problems}\label{sec:concl}

A major motivation for this paper is the observation of R.~Osburn and B.~Sahu
\cite{os-mf} that all Ap\'ery-like numbers appear to satisfy
supercongruences. However, despite recent progress \cite{oss-sporadic}, it
remains open to show that, for instance, the {\emph{Almkvist--Zudilin
numbers}} \cite[sequence (4.12){\hspace{0.25em}}($\delta$)]{asz-clausen},
\cite{cz-apery}, \cite{ccs-apery}
\begin{equation}
  Z (n) = \sum_{k = 0}^{\lfloor n / 3 \rfloor} (- 3)^{n - 3 k} \frac{(n + k)
  !}{(n - 3 k) !k!^4}, \label{eq:az}
\end{equation}
satisfy the supercongruence
\begin{equation}
  Z (p^r m) \equiv Z (p^{r - 1} m) \quad (\operatorname{mod} p^{3 r}) \label{eq:az-sc}
\end{equation}
for all primes $p \geq 3$. While the case $r > 1$ remains open,
T.~Amdeberhan and R.~Tauraso \cite{at-az} recently proved the case $r = 1$
of these congruences. It is not obvious how to introduce a $q$-analog of the
Almkvist--Zudilin numbers $Z (n)$ in such a way that the congruence $Z (p m)
\equiv Z (m)$ modulo $p^3$ has a polynomial analog of the kind proved in
Corollary~\ref{cor:A:qcong} for the $q$-Ap\'ery numbers. On the other hand,
finding a suitable $q$-analog of the numbers $Z (n)$ might provide some
insight towards approaching the conjectured congruences \eqref{eq:az-sc} (and,
even more optimistically, might lead to a better understanding of why such
supercongruences hold for all of these sequences). In addition, as explained
after Corollary~\ref{cor:A:qcong}, additional techniques exist for approaching
polynomial supercongruences, including the {\emph{creative microscoping}}
approach recently discovered by V.~Guo and W.~Zudilin \cite{gz-rama-q}, who
apply it with remarkable success to supercongruences in the context of
Ramanujan-type formulae for $1 / \pi$. It would be interesting to determine
whether this approach, which involves (creatively!) introducing an additional
parameter, can be applied to the congruences studied herein.

Returning to the specific case of the Almkvist--Zudilin numbers $Z (n)$, it
was observed in \cite{s-apery} that they are the diagonal coefficients of
the particularly simple rational function
\begin{equation}
  \frac{1}{1 - (x_1 + x_2 + x_3 + x_4) + 27 x_1 x_2 x_3 x_4} \label{eq:az-rf}
  .
\end{equation}
Recall that, similarly, the Ap\'ery numbers are the diagonal coefficients of
the rational function \eqref{eq:apery-rf}, and that we were able to use that
fact to introduce natural $q$-analogs. However, in the case of the
Almkvist--Zudilin numbers, our attempts to define a $q$-analog satisfying
supercongruences like \eqref{eq:A:qcong:intro} have not been successful. A
crucial difference is that MacMahon's Master Theorem does not apply as it did
in the case of the Ap\'ery number rational function.

It was shown by F.~Beukers \cite{beukers-apery87} that the Ap\'ery numbers
$A (n)$ occur as the coefficients when expanding the modular form
\begin{equation}
  \frac{\eta^7 (2 \tau) \eta^7 (3 \tau)}{\eta^5 (\tau) \eta^5 (6 \tau)} =
  \sum_{n \geq 0} A (n)  \left(\frac{\eta (\tau) \eta (6 \tau)}{\eta (2
  \tau) \eta (3 \tau)} \right)^{12 n} . \label{eq:apery-mp}
\end{equation}
Note that the expansion \eqref{eq:apery-mp} is in terms of a modular function.
As usual, $\eta (\tau)$ denotes the Dedekind eta function $\eta (\tau) =
e^{\pi i \tau / 12} \prod_{n \geq 1} (1 - e^{2 \pi i n \tau})$. Can this
connection to modular forms be suitably extended to a $q$-analog of the
Ap\'ery numbers?

In another direction, Beukers \cite{beukers-apery87} related the Ap\'ery
numbers to the $p$th Fourier coefficient $a (p)$ of the modular form $\eta^4
(2 \tau) \eta^4 (4 \tau)$, where $\eta (\tau)$ is the usual Dedekind eta
function. He conjectured the congruence
\begin{equation}
  A \left(\frac{p - 1}{2} \right) \equiv a (p) \quad (\operatorname{mod} p^2),
  \label{eq:A:scm}
\end{equation}
and proved that \eqref{eq:A:scm} holds modulo $p$. The \textit{modular
supercongruence} \eqref{eq:A:scm} was later proved by S.~Ahlgren and K.~Ono
\cite{ahlgren-ono-apery}, who reduce it to the identity
\begin{equation}
  \sum_{k = 1}^n \binom{n}{k}^2 \binom{n + k}{k}^2 \{ 1 + 2 k H_{n + k} + 2 k
  H_{n - k} - 4 k H_k \} = 0, \label{eq:A:scm:h}
\end{equation}
involving the harmonic sums $H_k = 1 / 1 + 1 / 2 + \cdots + 1 / k$, which may
be confirmed by the WZ method \cite{aeoz-beukers}. A classical proof of
\eqref{eq:A:scm:h}, by means of a partial fraction decomposition, has been
given by W.~Chu \cite{chu-apery}, who proves that
\begin{equation*}
  \frac{x (1 - x)_n^2}{(x)_{n + 1}^2} = \frac{1}{x} + \sum_{k = 1}^n
   \binom{n}{k}^2 \binom{n + k}{k}^2 \left\{ \frac{- k}{(x + k)^2} + \frac{1 +
   2 k H_{n + k} + 2 k H_{n - k} - 4 k H_k}{x + k} \right\},
\end{equation*}
which reduces to \eqref{eq:A:scm:h} in the limit. A $q$-analog of Chu's
identity is obtained by D.~Zheng \cite{zheng-qapery}. In the limit, Zheng's
result specializes to \eqref{eq:Aq:chu}, which is a $q$-variation of
\eqref{eq:A:scm:h} (taking a different limit, Zheng \
\cite[Corollary~3]{zheng-qapery} also gives a literal $q$-analog of
\eqref{eq:A:scm:h}). It would be of considerable interest to find an
appropriate setting for $q$-extending the supercongruence \eqref{eq:A:scm}.
Some pointers might be found in the recent work \cite{gz-rama-q} of V.~Guo
and W.~Zudilin on $q$-analogs of Ramanujan-type formulae for $1 / \pi$.

\appendix\section{Proof of
Theorem~\ref{thm:qlunggren:x}}\label{sec:qljunggren}

The proof of Theorem~\ref{thm:qlunggren:x} presented in this appendix is a
straightforward extension of the corresponding proof given by the author in
\cite{straub-qljunggren}, from where most parts are taken literally.

It is well-known that the $q$-binomial coefficients have the following
square-free factorization into cyclotomic polynomials. A less explicit
statement appears in \cite[Lemma~2]{clark-qbin95}, where the conclusion is
emphasized that $\binom{n}{k}_q$ is a square-free polynomial which, for $k =
1, 2, \ldots, n - 1$, is divisible by $\Phi_n (q)$.

\begin{lemma}
  For nonnegative integers $n, k$,
  \begin{equation}
    \binom{n}{k}_q = \prod_d \Phi_d (q), \label{eq:qbin:fac}
  \end{equation}
  where the product is taken over those $d \in \{2, 3, \ldots, n\}$ for which
  $\lfloor n / d \rfloor - \lfloor k / d \rfloor - \lfloor (n - k) / d \rfloor
  = 1$.
\end{lemma}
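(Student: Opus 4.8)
The plan is to determine, for each $d \geq 2$, the exact multiplicity with which $\Phi_d(q)$ divides $\binom{n}{k}_q$, by first computing this multiplicity for the $q$-factorial and then showing it is always $0$ or $1$.

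First I would recall the classical cyclotomic factorization $q^N - 1 = \prod_{d \mid N} \Phi_d(q)$; dividing by $q - 1 = \Phi_1(q)$ yields $[N]_q = \prod_{d \mid N,\, d \geq 2} \Phi_d(q)$. Forming the product of these identities over $N = 1, 2, \ldots, n$ and collecting like factors, the exponent of $\Phi_d(q)$ in $[n]_q! = [n]_q [n-1]_q \cdots [1]_q$ equals the number of multiples of $d$ among $1, 2, \ldots, n$, that is, $\lfloor n/d \rfloor$; hence $[n]_q! = \prod_{d \geq 2} \Phi_d(q)^{\lfloor n/d \rfloor}$. Combining this with the definition \eqref{eq:qbin} of the $q$-binomial coefficient, the multiplicity of $\Phi_d(q)$ in $\binom{n}{k}_q$ is
\[
  e_d = \left\lfloor \frac{n}{d} \right\rfloor - \left\lfloor \frac{k}{d} \right\rfloor - \left\lfloor \frac{n-k}{d} \right\rfloor .
\]

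Next I would use the elementary inequality that $\lfloor x + y \rfloor - \lfloor x \rfloor - \lfloor y \rfloor \in \{0, 1\}$ for all real $x, y$ (the value being $1$ exactly when the fractional parts satisfy $\{x\} + \{y\} \geq 1$). Taking $x = k/d$ and $y = (n-k)/d$, so that $x + y = n/d$, this gives $e_d \in \{0, 1\}$ for every $d$, whence $\binom{n}{k}_q$ is square-free and $\Phi_d(q) \mid \binom{n}{k}_q$ precisely when $e_d = 1$. To confirm the stated range of $d$, observe that $e_1 = n - k - (n-k) = 0$ and that $e_d = 0$ once $d > n$ (all three floors then vanish), so only $d \in \{2, 3, \ldots, n\}$ can contribute a nontrivial factor; this is exactly \eqref{eq:qbin:fac}.

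I do not expect a real obstacle. The only points that merit a word of justification are that the $\Phi_d(q)$ are pairwise coprime in $\mathbb{Q}[q]$ --- which holds because each $\Phi_d$ is irreducible over $\mathbb{Q}$ and they have disjoint root sets --- so that the products above genuinely record factorizations into irreducibles, and that the multiplicity of a given $\Phi_d$ in a product of $q$-integers is additive over the factors; both are routine.
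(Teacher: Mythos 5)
Your proof is correct and follows essentially the same route as the paper's: factor each $q$-integer $[N]_q$ into cyclotomic polynomials, deduce $[n]_q! = \prod_{d \geq 2} \Phi_d(q)^{\lfloor n/d \rfloor}$, and observe that the resulting exponent $\lfloor n/d \rfloor - \lfloor k/d \rfloor - \lfloor (n-k)/d \rfloor$ lies in $\{0,1\}$. The extra remarks you include (pairwise coprimality of the $\Phi_d$ and the fractional-part criterion) are fine but not needed beyond what the paper states.
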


\begin{proof}
  By definition, the $q$-number $[n]_q$ has the factorization
  \begin{equation*}
    [n]_q = \prod_{d|n, d > 1} \Phi_d (q) .
  \end{equation*}
  Therefore,
  \begin{equation*}
    [n]_q ! = \prod_{m = 1}^n \prod_{d|m, d > 1} \Phi_d (q) = \prod_{d = 2}^n
     \Phi_d (q)^{\lfloor n / d \rfloor},
  \end{equation*}
  as well as
  \begin{equation*}
    \binom{n}{k}_q = \frac{[n]_q !}{[k]_q ! [n - k]_q !} = \prod_{d = 2}^n
     \Phi_d (q)^{\lfloor n / d \rfloor - \lfloor k / d \rfloor - \lfloor (n -
     k) / d \rfloor} .
  \end{equation*}
  Clearly, $\lfloor n / d \rfloor - \lfloor k / d \rfloor - \lfloor (n - k) /
  d \rfloor \in \{0, 1\}$, so that we obtain \eqref{eq:qbin:fac}.
\end{proof}

In preparation for the proof of Theorem~\ref{thm:qlunggren:x}, we show the
following special case, to which the general case is then reduced.

\begin{lemma}
  \label{lem:qwol:x}For integers $n \geq 0$,
  \begin{equation}
    \binom{2 n}{n}_q \equiv [2]_{q^{n^2}} - \frac{n^2 - 1}{12} (q^n - 1)^2
    \quad (\operatorname{mod} \Phi_n (q)^3) . \label{eq:qwol:x}
  \end{equation}
\end{lemma}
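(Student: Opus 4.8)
The plan is to adapt Ljunggren's classical argument, in the form used for the $q$-setting in \cite{straub-qljunggren}: write the central $q$-binomial coefficient as a product and expand it modulo $\Phi_n(q)^3$, thereby reducing everything to $q$-analogs of Wolstenholme's harmonic-sum congruences. First I would write
\begin{equation*}
  \binom{2 n}{n}_q = \prod_{k = 1}^{n} \frac{[n + k]_q}{[k]_q}
\end{equation*}
and split off the factor $k = n$, which equals $[2 n]_q / [n]_q = [2]_{q^n} = 1 + q^n$ by the identity $[a b]_q = [a]_{q^b} [b]_q$. For $1 \leq k \leq n - 1$, the relation $[n + k]_q = [n]_q + q^n [k]_q$ gives $[n + k]_q / [k]_q = q^n (1 + q^{- n} [n]_q / [k]_q)$, so that
\begin{equation*}
  \binom{2 n}{n}_q = (1 + q^n) \, q^{n (n - 1)} \prod_{k = 1}^{n - 1} \left( 1 + q^{- n} \frac{[n]_q}{[k]_q} \right) .
\end{equation*}
The point of this rewriting is that, by the cyclotomic factorization \eqref{eq:qbin:fac} (equivalently by $[n]_q = \prod_{d \mid n, \, d > 1} \Phi_d (q)$), the polynomial $[n]_q$ is divisible by $\Phi_n (q)$ exactly once, while each $[k]_q$ with $1 \leq k \leq n - 1$ is coprime to $\Phi_n (q)$; so, working as needed in the localization of $\mathbb{Q} [q^{\pm 1}]$ at $\Phi_n (q)$, every summand $q^{- n} [n]_q / [k]_q$ is divisible by $\Phi_n (q)$.

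Next I would expand the product and discard the terms of order $\Phi_n (q)^3$ or higher, obtaining
\begin{equation*}
  \prod_{k = 1}^{n - 1} \left( 1 + q^{- n} \frac{[n]_q}{[k]_q} \right) \equiv 1 + q^{- n} [n]_q \sum_{k = 1}^{n - 1} \frac{1}{[k]_q} + q^{- 2 n} [n]_q^2 \sum_{1 \leq i < j \leq n - 1} \frac{1}{[i]_q [j]_q} \pmod{\Phi_n (q)^3} .
\end{equation*}
Because $[n]_q$ carries one factor of $\Phi_n (q)$ and $[n]_q^2$ two, and because $\sum_{i < j} 1 / ([i]_q [j]_q) = \tfrac{1}{2} \bigl( \bigl( \sum_k 1 / [k]_q \bigr)^2 - \sum_k 1 / [k]_q^2 \bigr)$, it suffices to evaluate $\sum_{k = 1}^{n - 1} 1 / [k]_q$ modulo $\Phi_n (q)^2$ and $\sum_{k = 1}^{n - 1} 1 / [k]_q^2$ modulo $\Phi_n (q)$. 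These are exactly the $q$-analogs of Wolstenholme's harmonic-sum congruences: I would invoke \eqref{eq:sp1} and \eqref{eq:sp2}, together with the sharpening of \eqref{eq:sp2} to the modulus $\Phi_n (q)^2$ established in \cite{straub-qljunggren}, to make both sums explicit.

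Finally I would set $u = q^n - 1$, which is divisible by $\Phi_n (q)$, substitute $[n]_q = u / (q - 1)$, and expand $q^{- n}$, $q^{n (n - 1)}$ and $q^{n^2} = (1 + u)^n$ binomially in $u$ modulo $u^3$. Collecting the contribution of $(1 + q^n) q^{n (n - 1)}$ together with those of the three harmonic sums, the whole expression should reorganize as $[2]_{q^{n^2}} - \tfrac{n^2 - 1}{12} (q^n - 1)^2$ modulo $\Phi_n (q)^3$, which is \eqref{eq:qwol:x}. The hard part is obtaining the value of $\sum_{k = 1}^{n - 1} 1 / [k]_q$ modulo $\Phi_n (q)^2$ rather than only modulo $\Phi_n (q)$: this is the $q$-analog of the fact that $\sum_{k = 1}^{p - 1} 1 / k \equiv 0 \pmod{p^2}$, not merely $\pmod{p}$, and it is precisely what upgrades Wolstenholme's congruence to Ljunggren's. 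Once that input is in hand, what remains is a careful but routine bookkeeping of powers of $q$ to check that all lower-order terms cancel as claimed.
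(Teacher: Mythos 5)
Your proposal is correct and follows essentially the same route as the paper: the same product representation $\binom{2n}{n}_q = [2]_{q^n}\prod_{k=1}^{n-1}\bigl([n]_q/[k]_q + q^n\bigr)$ (you merely factor out $q^n$ from each factor), expansion to second order in $\Phi_n(q)$, and the harmonic-sum inputs \eqref{eq:sp1} modulo $\Phi_n(q)^2$ and \eqref{eq:sp2} modulo $\Phi_n(q)$, followed by binomial reduction of the powers $q^{mn}$ in terms of $q^n-1$. One minor remark: the ``sharpening of \eqref{eq:sp2} to modulus $\Phi_n(q)^2$'' you invoke is neither needed nor used, since, as your own accounting shows, the double sum is multiplied by $[n]_q^2$ and hence only requires \eqref{eq:sp2} modulo $\Phi_n(q)$.
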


\begin{proof}
  Using that $[a n]_q = [a]_{q^n} [n]_q$ and $[n + k]_q = [n]_q + q^n  [k]_q$,
  we compute
  \begin{equation*}
    \binom{2 n}{n}_q = \frac{[2 n]_q}{[n]_q} \prod_{k = 1}^{n - 1} \frac{[n +
     k]_q}{[k]_q} = [2]_{q^n} \prod_{k = 1}^{n - 1} \left(\frac{[n]_q}{[k]_q}
     + q^n \right),
  \end{equation*}

  which modulo $\Phi_n (q)^3$ reduces to (note that the terms $[k]_q$, with $k
  < n$, are relatively prime to $\Phi_n (q)$)
  \begin{equation}
    [2]_{q^n} \left(q^{(n - 1) n} + q^{(n - 2) n} \sum_{0 < i < n}
    \frac{[n]_q}{[i]_q} + q^{(n - 3) n} \sum_{0 < i < j < n} \frac{[n]_q
    [n]_q}{[i]_q [j]_q} \right) . \label{eq-2ppharmonic}
  \end{equation}
  Generalizing an earlier result of Andrews \cite{andrews-qcong99}, Shi and
  Pan prove in \cite{shipan-qwolst07} and \cite{pan-wolstenholme} that,
  for all positive integers $n$,
  \begin{equation}
    \sum_{0 < i < n} \frac{1}{[i]_q} \equiv - \frac{n - 1}{2} (q - 1) +
    \frac{n^2 - 1}{24} (q - 1)^2 [n]_q \quad (\operatorname{mod} \Phi_n (q)^2) .
    \label{eq:sp1}
  \end{equation}
  Note that this is a $q$-analog of Wolstenholme's well-known harmonic series
  congruence. Further, we have
  \begin{equation}
    \sum_{0 < i < n} \frac{1}{[i]_q^2} \equiv - \frac{(n - 1) (n - 5)}{12} (q
    - 1)^2 \quad (\operatorname{mod} \Phi_n (q)), \label{eq:sp2}
  \end{equation}
  which is shown, for prime $n$, in \cite{shipan-qwolst07} and the proof
  extends readily to the case of composite $n$. Combining the two congruences
  \eqref{eq:sp1} and \eqref{eq:sp2}, we conclude that
  \begin{equation}
    \sum_{0 < i < j < n} \frac{1}{[i]_q [j]_q} \equiv \frac{(n - 1) (n -
    2)}{6} (q - 1)^2 \quad (\operatorname{mod} \Phi_n (q)) . \label{eq:sp3}
  \end{equation}
  Applying the congruences \eqref{eq:sp1} and \eqref{eq:sp3} to
  \eqref{eq-2ppharmonic}, we obtain
  \begin{eqnarray*}
    \binom{2 n}{n}_q & \equiv & (1 + q^n) \left(q^{(n - 1) n} + q^{(n - 2) n}
    \left(- \frac{n - 1}{2} (q^n - 1) + \frac{n^2 - 1}{24} (q^n - 1)^2
    \right) \right.\\
    &  & \left. + q^{(n - 3) n} \frac{(n - 1) (n - 2)}{6} (q^n - 1)^2 \right)
    \quad (\operatorname{mod} \Phi_n (q)^3) .
  \end{eqnarray*}
  In order to reduce the terms $q^{m n}$ modulo the appropriate power of
  $\Phi_n (q)$, we use the binomial expansion
  \begin{equation*}
    q^{m n} = ((q^n - 1) + 1)^m = \sum_{k = 0}^m \binom{m}{k} (q^n - 1)^k
     ,
  \end{equation*}
  This results in the simplified congruence
  \begin{equation}
    \binom{2 n}{n}_q \equiv 2 + n (q^n - 1) + \frac{(n - 1) (5 n - 1)}{12}
    (q^n - 1)^2 \quad (\operatorname{mod} \Phi_n (q)^3) . \label{eq:qwol:x2}
  \end{equation}

  It only remains to note that
  \begin{equation*}
    [2]_{q^{n^2}} = 1 + q^{n^2} \equiv 2 + n (q^n - 1) + \frac{n (n - 1)}{2}
     (q^n - 1)^2 \quad (\operatorname{mod} \Phi_n (q)^3),
  \end{equation*}
  in order to see that \eqref{eq:qwol:x2} is equivalent to the claimed
  congruence \eqref{eq:qwol:x}.
\end{proof}

We are now in a comfortable position to prove Theorem~\ref{thm:qlunggren:x},
which is restated below for the convenience of the reader.

\begin{theorem}
  For positive integers $n$ and nonnegative integers $a, b$,
  \begin{equation}
    \binom{a n}{b n}_q \equiv \binom{a}{b}_{q^{n^2}} - (a - b) b \binom{a}{b}
    \frac{n^2 - 1}{24} (q^n - 1)^2 \quad (\operatorname{mod} \Phi_n (q)^3) .
    \label{eq:qljunggren:x}
  \end{equation}
\end{theorem}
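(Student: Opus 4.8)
The plan is to reduce the general congruence \eqref{eq:qljunggren:x} to the Wolstenholme-type special case $a=2,b=1$ already established in Lemma~\ref{lem:qwol:x}, via an induction (in effect, a telescoping over a path in the $(a,b)$-lattice). The basic mechanism is the $q$-Pascal-type identity for ratios of $q$-binomials: writing
\begin{equation*}
  \binom{an}{bn}_q = \prod_{i=1}^{b} \frac{[ (a-i+1)n ]_q \cdots}{\cdots}
\end{equation*}
is awkward, so instead I would exploit the multiplicative recursion
\begin{equation*}
  \frac{\binom{an}{bn}_q}{\binom{(a-1)n}{bn}_q} \cdot \frac{\binom{(a-1)n}{(b-1)n}_q}{\binom{(a-1)n}{bn}_q}
\end{equation*}
type relations; more concretely, the cleanest route is to observe that the quantity
\begin{equation*}
  \binom{an}{bn}_q \Big/ \binom{a}{b}_{q^{n^2}}
\end{equation*}
should be analyzed modulo $\Phi_n(q)^3$. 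First I would record the ``leading order'' statement: by Lemma~\ref{lem:qlucas} (the $q$-Lucas congruence) one has $\binom{an}{bn}_q \equiv \binom{a}{b}$ modulo $\Phi_n(q)$, and in fact a short computation with $[an]_q = [a]_{q^n}[n]_q$ and $[n+k]_q = [n]_q + q^n[k]_q$ upgrades this to an exact product formula
\begin{equation*}
  \binom{an}{bn}_q = \binom{a}{b}_{q^n} \prod \Bigl(1 + \tfrac{[n]_q}{[\ell]_q}\,(\cdots)\Bigr)
\end{equation*}
over the ``non-multiple-of-$n$'' residues, exactly mirroring the factorization used in the proof of Lemma~\ref{lem:qwol:x}.

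The key steps, in order, are: (1) expand $\binom{an}{bn}_q$ as a product of factors of the form $[an - \ell]_q/[bn-\ell]_q$ or $[cn+k]_q/[dn+k]_q$, separating the factors where the index is a multiple of $n$ (these assemble into $\binom{a}{b}_{q^n}$) from the remaining factors, each of which is $\equiv 1$ modulo $\Phi_n(q)$ and hence contributes a correction $1 + \Phi_n(q)\cdot(\text{stuff})$; (2) expand the product of these correction factors modulo $\Phi_n(q)^3$, keeping the linear and quadratic terms in $[n]_q$ — this produces a main term, a single sum $\sum 1/[i]_q$, and a double sum $\sum_{i<j} 1/([i]_q[j]_q)$ together with a sum $\sum 1/[i]_q^2$, over ranges $0 < i < n$; (3) substitute the Shi–Pan congruences \eqref{eq:sp1}, \eqref{eq:sp2}, \eqref{eq:sp3} to evaluate all of these modulo the appropriate power of $\Phi_n(q)$; (4) convert the remaining powers $q^{cn}$ into polynomials in $(q^n-1)$ via the binomial expansion $q^{cn} = ((q^n-1)+1)^c$, and separately expand $\binom{a}{b}_{q^{n^2}}$ in powers of $(q^n-1)$ using $q^{n^2} = ((q^n-1)+1)^n$; (5) match coefficients of $(q^n-1)^0$, $(q^n-1)^1$, $(q^n-1)^2$ on the two sides and verify the quadratic coefficient is $-(a-b)b\binom{a}{b}(n^2-1)/24$. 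An alternative to steps (1)–(2) — possibly cleaner for bookkeeping — is to induct on $a$, using the case $b=1$ (which is essentially Lemma~\ref{lem:qwol:x} generalized, $\binom{an}{n}_q \equiv [a]_{q^{n^2}} - \cdots$) as the engine and a $q$-Chu–Vandermonde or $q$-Pascal relation to pass from $\binom{(a-1)n}{bn}_q$ to $\binom{an}{bn}_q$; the arithmetic-function identity $(a-b)b\binom{a}{b}$ satisfies exactly the additive relation such a telescoping would demand, which is reassuring.

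The main obstacle I anticipate is step (2)/(4): the bookkeeping of which second-order terms survive modulo $\Phi_n(q)^3$. Several terms that are individually only controlled modulo $\Phi_n(q)$ (the double sum, the sum of reciprocal squares) get multiplied by $[n]_q^2$, which is divisible by $\Phi_n(q)^2$, so they are fine; but the single sum $\sum 1/[i]_q$ is multiplied only by $[n]_q$ and must be known modulo $\Phi_n(q)^2$ — which is precisely the content of \eqref{eq:sp1}, so the hypotheses are exactly strong enough, with no slack. Getting the powers of $q$ right in the corrections $[n+k]_q/[k]_q = [n]_q/[k]_q + q^n$ (versus the $b=1$ case where only this simple shift appears) is where care is needed: for general $b$ one has products of factors $[cn+k]_q/[dn+k]_q$ with the $q^{cn}$, $q^{dn}$ not equal to $1$, and these prefactors interact with the linear $(q^n-1)$ term. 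I expect that, as in Lemma~\ref{lem:qwol:x}, once everything is pushed to the variable $u = q^n - 1$ and truncated at $u^2$, the identity becomes a finite verification; the only genuinely new input beyond what is in the appendix already is organizing the product over $b$ blocks of residues rather than a single block.
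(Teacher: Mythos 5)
Your strategy is sound, but it is genuinely different from---and substantially heavier than---the route taken in the paper. You propose to generalize the proof of Lemma~\ref{lem:qwol:x} directly: write $\binom{an}{bn}_q=\prod_{j=1}^{bn}[(a-b)n+j]_q/[j]_q$, peel off the indices $j$ divisible by $n$ (which indeed assemble into $\binom{a}{b}_{q^n}$), expand the remaining $b(n-1)$ unit factors to second order in $[n]_q$, and evaluate the resulting single and double harmonic sums via \eqref{eq:sp1}--\eqref{eq:sp3} before matching coefficients of powers of $q^n-1$. The ingredients you name are the right ones, and you correctly locate the precision-critical point (the single sum must be known modulo $\Phi_n(q)^2$, which is exactly \eqref{eq:sp1}); I see no step that would fail. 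However, the decisive part---carrying out the order-$(q^n-1)^2$ bookkeeping across the $b$ blocks, with the $q^{(a-b)n}$ prefactors and the $[\ell]_{q^n}$, $[a-b+\ell]_{q^n}$ coefficients, and extracting the coefficient $-(a-b)b\binom{a}{b}\frac{n^2-1}{24}$ for general $a,b$---is left as an anticipated ``finite verification,'' and for general $a,b$ it is a nontrivial symbolic computation. The paper sidesteps it entirely: it expands $\binom{an}{bn}_q$ by the $q$-Chu--Vandermonde convolution \eqref{eq:qchu} into a sum of products $\binom{n}{c_1}_q\cdots\binom{n}{c_a}_q$, uses the cyclotomic factorization \eqref{eq:qbin:fac} to see that modulo $\Phi_n(q)^3$ only the configurations with all $c_i\in\{0,n\}$ (which sum to $\binom{a}{b}_{q^{n^2}}$) or all but two such (each contributing $\binom{2n}{n}_q-[2]_{q^{n^2}}$) survive, and obtains the factor $\frac{(a-b)b}{2}\binom{a}{b}$ as the count $\binom{a}{b+1}\binom{b+1}{2}$ of such configurations, so that the only harmonic-sum analysis needed is the single case $a=2$, $b=1$, i.e.\ Lemma~\ref{lem:qwol:x}. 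That reduction is close in spirit to your alternative suggestion of invoking a $q$-Chu--Vandermonde relation, which you mention but do not develop. What your direct route buys is independence from the convolution identity \eqref{eq:qchu}; what the paper's route buys is that the general $(a,b)$ case costs only a counting argument rather than a fresh second-order expansion.
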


\begin{proof}
  Observe that the two sides of \eqref{eq:qljunggren:x} are trivially equal
  when $a = 0$ or $a = 1$. In the following, we therefore assume that $a
  \geq 2$. Then, as shown in \cite{clark-qbin95}, it follows from a
  $q$-analog of the Chu-Vandermonde convolution formula that
  \begin{equation}
    \binom{a n}{b n}_q = \sum_{c_1 + \ldots + c_a = b n} q^{^{n \sum_{1
    \leq i \leq a} (i - 1) c_i - \sum_{1 \leq i < j \leq
    a} c_i c_j}} \binom{n}{c_1}_q \binom{n}{c_2}_q \cdots \binom{n}{c_a}_q .
    \label{eq:qchu}
  \end{equation}
  By \eqref{eq:qbin:fac}, the binomial coefficient $\binom{n}{k}_q$ is
  divisible by $\Phi_n (q)$ except in the boundary cases $k = 0$ and $k = n$.
  Therefore, the summands on the right-hand side of \eqref{eq:qchu} which are
  not divisible by $\Phi_n (q)^2$ correspond to the $c_i$ taking only the
  values $0$ and $n$. Since each such summand is determined by the indices $1
  \leq j_1 < j_2 < \ldots < j_b \leq a$ for which $c_i = n$, the
  total contribution of these terms is
  \begin{equation*}
    \sum_{1 \leq j_1 < \ldots < j_b \leq a} q^{n^2 \sum_{k = 1}^b
     (j_k - 1)  - n^2 \binom{b}{2}} = \sum_{0 \leq i_1 \leq
     \ldots \leq i_b \leq a - b} q^{n^2 \sum_{k = 1}^b i_k} =
     \binom{a}{b}_{q^{n^2}} .
  \end{equation*}
  Note that we have arrived at \eqref{eq:qljunggren:x} modulo $\Phi_n (q)^2$,
  that is
  \begin{equation*}
    \binom{a n}{b n}_q \equiv \binom{a}{b}_{q^{n^2}} \quad (\operatorname{mod} \Phi_n
     (q)^2),
  \end{equation*}
  which is the main result of \cite{clark-qbin95}.
  
  In order to prove the congruence \eqref{eq:qljunggren:x}, we now consider
  those summands in \eqref{eq:qchu} which are divisible by $\Phi_n (q)^2$ but
  not divisible by $\Phi_n (q)^3$. These correspond to all but two of the
  $c_i$ taking values $0$ or $n$. More precisely, such a summand is determined
  by indices $1 \leq j_1 < j_2 < \ldots < j_b < j_{b + 1} \leq a$,
  two subindices $1 \leq k < \ell \leq b + 1$, and $1 \leq d
  \leq n - 1$ such that
  \begin{equation*}
    c_i = \left\{ \begin{array}{ll}
       d, & \text{for $i = j_k$,}\\
       n - d, & \text{for $i = j_{\ell}$,}\\
       n, & \text{for $i \in \{j_1, \ldots, j_{b + 1} \}\backslash\{j_k,
       j_{\ell} \}$,}\\
       0, & \text{for $i \not\in \{j_1, \ldots, j_{b + 1} \}$.}
     \end{array} \right.
  \end{equation*}
  For each fixed choice of the $j_i$ and $k, \ell$, the contribution of the
  corresponding summands is
  \begin{equation}
    \sum_{d = 1}^{n - 1} q^{n \sum_{1 \leq i \leq a} (i - 1) c_i
     - \sum_{1 \leq i < j \leq a} c_i c_j} \binom{n}{d}_q
    \binom{n}{n - d}_q . \label{eq:qsum:d}
  \end{equation}
  Note that $q^n \equiv 1$ modulo $\Phi_n (q)$, and
  \begin{equation*}
    \sum_{1 \leq i < j \leq a} c_i c_j \equiv d (n - d) \equiv -
     d^2 \quad (\operatorname{mod} n) .
  \end{equation*}
  Since each binomial in \eqref{eq:qsum:d} is divisible by $\Phi_n (q)$, we
  conclude that \eqref{eq:qsum:d} reduces modulo $\Phi_n (q)^3$ to
  \begin{equation*}
    \sum_{d = 1}^{n - 1} q^{d^2} \binom{n}{d}_q^2 = \binom{2 n}{n}_q -
     [2]_{q^{n^2}},
  \end{equation*}
  where the equality follows from the special case $a = 2$ and $b = 1$ of
  \eqref{eq:qchu}. Since this does not depend on the value of $d$, and because
  there are
  \begin{equation*}
    \binom{a}{b + 1} \binom{b + 1}{2} = \frac{(a - b) b}{2} \binom{a}{b}
  \end{equation*}
  choices for the $j_i$ and $k, \ell$, we find that
  \begin{equation*}
    \binom{a n}{b n}_q \equiv \binom{a}{b}_{q^{n^2}} + \frac{(a - b) b}{2}
     \binom{a}{b} \left[ \binom{2 n}{n}_q - [2]_{q^{n^2}} \right] \quad
     (\operatorname{mod} \Phi_n (q)^3) .
  \end{equation*}
  The general result therefore follows from the special case $a = 2$, $b = 1$,
  which is proved in Lemma~\ref{lem:qwol:x}.
\end{proof}

\begin{acknowledgements}
I am extremely thankful to Ofir Gorodetsky for
motivating me to finally complete and write up the present paper, and for
sharing a preprint of his work \cite{gorodetsky-cong-q} in which he obtains
$q$-congruences, from which he is able to derive the stronger congruences $A
(p^r m) \equiv A (p^{r - 1} m)$ modulo $p^{3 r}$ for the Ap\'ery numbers due
to Beukers and Coster. I am also grateful to Ofir Gorodetsky and Wadim Zudilin
for helpful comments on an early draft of this paper.
\end{acknowledgements}


\begin{thebibliography}{10}

\bibitem{abdj-cyclotomic}
B.~Adamczewski, J.~P. Bell, E.~Delaygue, and F.~Jouhet.
\newblock Congruences modulo cyclotomic polynomials and algebraic independence
  for {$q$}-series.
\newblock {\em S\'em. Lothar. Combin.}, 78B:Art. 54, 12, 2017.

\bibitem{aeoz-beukers}
S.~Ahlgren, S.~B. Ekhad, K.~Ono, and D.~Zeilberger.
\newblock A binomial coefficient identity associated to a conjecture of
  {B}eukers.
\newblock {\em Electronic Journal of Combinatorics}, 5:\#R10, 1 p., 1998.

\bibitem{ahlgren-ono-apery}
S.~Ahlgren and K.~Ono.
\newblock A {G}aussian hypergeometric series evaluation and {A}p{\'e}ry number
  congruences.
\newblock {\em Journal f{\"u}r die reine und angewandte Mathematik},
  2000(518):187--212, Jan. 2000.

\bibitem{asz-clausen}
G.~Almkvist, D.~van Straten, and W.~Zudilin.
\newblock Generalizations of {C}lausen's formula and algebraic transformations
  of {C}alabi--{Y}au differential equations.
\newblock {\em Proceedings of the Edinburgh Mathematical Society},
  54(2):273--295, 2011.

\bibitem{at-az}
T.~Amdeberhan and R.~Tauraso.
\newblock Supercongruences for the {A}lmkvist--{Z}udilin numbers.
\newblock {\em Acta Arithmetica}, 173:255--268, 2016.

\bibitem{andrews-qcong99}
G.~E. Andrews.
\newblock $q$-analogs of the binomial coefficient congruences of {B}abbage,
  {W}olstenholme and {G}laisher.
\newblock {\em Discrete Mathematics}, 204(1):15--25, 1999.

\bibitem{apery}
R.~Ap{\'e}ry.
\newblock Irrationalit\'e de $\zeta(2)$ et $\zeta(3)$.
\newblock {\em Ast\'erisque}, 61:11--13, 1979.

\bibitem{beukers-apery85}
F.~Beukers.
\newblock Some congruences for the {A}p{\'e}ry numbers.
\newblock {\em Journal of Number Theory}, 21(2):141--155, Oct. 1985.

\bibitem{beukers-apery87}
F.~Beukers.
\newblock Another congruence for the {A}p{\'e}ry numbers.
\newblock {\em Journal of Number Theory}, 25(2):201--210, Feb. 1987.

\bibitem{ccs-apery}
H.~H. Chan, S.~Cooper, and F.~Sica.
\newblock Congruences satisfied by {A}p{\'e}ry-like numbers.
\newblock {\em International Journal of Number Theory}, 6(1):89, 2010.

\bibitem{cz-apery}
H.~H. Chan and W.~Zudilin.
\newblock New representations for {A}p\'ery-like sequences.
\newblock {\em Mathematika}, 56:107--117, 2010.

\bibitem{ccc-apery}
S.~Chowla, J.~Cowles, and M.~Cowles.
\newblock Congruence properties of {A}p{\'e}ry numbers.
\newblock {\em Journal of Number Theory}, 12(2):188--190, May 1980.

\bibitem{chu-apery}
W.~Chu.
\newblock A binomial coefficient identity associated with {B}eukers' conjecture
  on {A}p{\'e}ry numbers.
\newblock {\em The Electronic Journal of Combinatorics}, 11(1):N15, Nov. 2004.

\bibitem{clark-qbin95}
W.~E. Clark.
\newblock $q$-analogue of a binomial coefficient congruence.
\newblock {\em International Journal of Mathematics and Mathematical Sciences},
  18(1):197--200, 1995.

\bibitem{coster-sc}
M.~J. Coster.
\newblock {\em Supercongruences}.
\newblock PhD thesis, Universiteit Leiden, 1988.

\bibitem{desarmenien-q}
J.~D{\'e}sarm{\'e}nien.
\newblock Un analogue des congruences de {K}ummer pour les $q$-nombres
  d'{E}uler.
\newblock {\em European Journal of Combinatorics}, 3(1):19--28, Mar. 1982.

\bibitem{ds-cong}
E.~Deutsch and B.~E. Sagan.
\newblock Congruences for {C}atalan and {M}otzkin numbers and related
  sequences.
\newblock {\em Journal of Number Theory}, 117(1):191--215, Mar. 2006.

\bibitem{fs-qbinomial}
S.~Formichella and A.~Straub.
\newblock Gaussian binomial coefficients with negative arguments.
\newblock {\em Preprint}, Feb. 2018.
\newblock \href{http://arxiv.org/abs/1802.02684}{arXiv:1802.02684}.

\bibitem{gessel-super}
I.~M. Gessel.
\newblock Some congruences for {A}p{\'e}ry numbers.
\newblock {\em Journal of Number Theory}, 14(3):362--368, June 1982.

\bibitem{gorodetsky-cong-q}
O.~Gorodetsky.
\newblock $q$-congruences, with applications to supercongruences and the cyclic
  sieving phenomenon.
\newblock {\em Preprint}, Mar. 2018.

\bibitem{granville-bin97}
A.~Granville.
\newblock Arithmetic properties of binomial coefficients {I}: {B}inomial
  coefficients modulo prime powers.
\newblock {\em CMS Conf. Proc.}, 20:253--275, 1997.

\bibitem{gz-rama-q}
V.~J.~W. Guo and W.~Zudilin.
\newblock A $q$-microscope for supercongruences.
\newblock {\em Preprint}, Mar. 2018.
\newblock \href{http://arxiv.org/abs/1803.01830}{arXiv:1803.01830}.

\bibitem{kc-q}
V.~Kac and P.~Cheung.
\newblock {\em Quantum Calculus}.
\newblock Springer-Verlag Universitytext, New York, 2002.

\bibitem{krz-q}
C.~Krattenthaler, T.~Rivoal, and W.~Zudilin.
\newblock S\'eries hyperg\'eom\'etriques basiques, {$q$}-analogues des valeurs
  de la fonction z\^eta et s\'eries d'{E}isenstein.
\newblock {\em J. Inst. Math. Jussieu}, 5(1):53--79, 2006.

\bibitem{macmahon-comb}
P.~A. MacMahon.
\newblock {\em Combinatory Analysis}.
\newblock Cambridge Univ. Press, 1915.

\bibitem{mimura-apery}
Y.~Mimura.
\newblock Congruence properties of {A}p{\'e}ry numbers.
\newblock {\em Journal of Number Theory}, 16(1):138--146, Feb. 1983.

\bibitem{olive-pow}
G.~Olive.
\newblock Generalized powers.
\newblock {\em American Mathematical Monthly}, 72(6):619--627, June 1965.

\bibitem{os-mf}
R.~Osburn and B.~Sahu.
\newblock Congruences via modular forms.
\newblock {\em Proceedings of the American Mathematical Society},
  139(7):2375--2381, 2011.

\bibitem{oss-sporadic}
R.~Osburn, B.~Sahu, and A.~Straub.
\newblock Supercongruences for sporadic sequences.
\newblock {\em Proceedings of the Edinburgh Mathematical Society},
  59(2):503--518, 2016.

\bibitem{pan-wolstenholme}
H.~Pan.
\newblock A generalization of {W}olstenholme's harmonic series congruence.
\newblock {\em Rocky Mountain Journal of Mathematics}, 38(4):1263--1269, Aug.
  2008.

\bibitem{alf}
A.~v.~d. Poorten.
\newblock A proof that {E}uler missed ... {A}p{\'e}ry's proof of the
  irrationality of $\zeta(3)$.
\newblock {\em Math. Intelligencer}, 1(4):195--203, 1979.

\bibitem{shipan-qwolst07}
L.-L. Shi and H.~Pan.
\newblock A $q$-analogue of {W}olstenholme's harmonic series congruence.
\newblock {\em Amer. Math. Monthly}, 114(6):529--531, 2007.

\bibitem{straub-qljunggren}
A.~Straub.
\newblock A $q$-analog of {L}junggren's binomial congruence.
\newblock {\em DMTCS Proceedings: 23rd International Conference on Formal Power
  Series and Algebraic Combinatorics (FPSAC 2011)}, pages 897--902, Jun 2011.
\newblock \href{http://arxiv.org/abs/1103.3258}{arXiv:1103.3258}.

\bibitem{s-apery}
A.~Straub.
\newblock Multivariate {A}p\'ery numbers and supercongruences of rational
  functions.
\newblock {\em Algebra \& Number Theory}, 8(8):1985--2008, 2014.

\bibitem{zheng-qapery}
D.-Y. Zheng.
\newblock An algebraic identity on $q$-{A}p{\'e}ry numbers.
\newblock {\em Discrete Math.}, 311(23–24):2708--2710, Dec. 2011.

\end{thebibliography}
\end{document}